\documentclass{amsart}
\usepackage{graphicx}
\vfuzz2pt 
\hfuzz2pt 
\newtheorem{thm}{Theorem}[section]
\newtheorem{cor}[thm]{Corollary}
\newtheorem{lem}[thm]{Lemma}
\newtheorem{prop}[thm]{Proposition}
\theoremstyle{definition}
\newtheorem{defn}[thm]{Definition}
\theoremstyle{remark}
\newtheorem{rem}[thm]{Remark}
\numberwithin{equation}{section}

\begin{document}

\title[Characterizing stationary probabilities in DPS systems]
{Characterization and asymptotic analysis\\ of the stationary probabilities\\
in Discriminatory Processor Sharing Systems}%
\author{Vyacheslav M. Abramov}%
\address{Swinburne University of Technology, Faculty of
Engineering and Industrial Systems, Hawthorn Campus, John Street, PO Box 218\\
Hawthorn, Victoria 3122, Australia}%
\email{vabramov126@gmail.com}%

\subjclass{60K25; 90B22; 41A60; 05E18}%
\keywords{Processor Sharing systems; closed product form
distribution;
stationary probabilities; asymptotic analysis}%

\begin{abstract}
In this paper, we establish two different results. The first result
is a characterization theorem saying that if the stationary state
probabilities for originally described Markovian discriminatory
processor sharing (DPS) system have a closed product geometric form
(the exact definition is given in the paper), then the system must
only be Egalitarian, i.e. all flows in this system must have equal
priorities. The second result is the tail asymptotics for the
stationary probabilities. We provide a detailed asymptotic analysis
of the system, and obtain the exact asymptotic form of the
stationary probabilities in DPS systems when the number of flows in
the system is large.
\end{abstract}
\maketitle
\section{Introduction} 
Discriminatory processor sharing (DPS) policy was originally
introduced and studied by Kleinrock \cite{Kleinrock1} under the
title priority processor sharing. It is an extension of usual
(non-priority) processor sharing (PS) policy, which also was
originally introduced by Kleinrock \cite{Kleinrock2}. The DPS system
is defined as follows. Suppose that there are $I$ flow classes. All
flows are served independently of each other. They share the service
time as follows. If there are $n_1$, $n_2$,\ldots, $n_I$ flows in
the system of the classes 1,2, \ldots, $I$, respectively, then the
rate of shared service of a class $i$ flow is
$$
\frac{g_i}{\sum_{l=1}^I g_ln_l},
$$
where $g_1$, $g_2$,\ldots, $g_I$ are `weights' of flows belonging to
the corresponding classes. Although the DPS policy was introduced
long time ago, the progress in its investigation is very limited.
The first substantial contribution to the theory of DPS systems was
due to Fayolle, Mitrani and Iasnogorodski \cite{FMIa}. These authors
derived the system of integro-differential equations for the
conditional expectation of the response time of a flow (the time
spent in the system by a flow of a given class arriving in the
system) given that the required service time of the flow exceeds the
level $t$ for the $M/G/1$ DPS system with $I$ flow classes, and
provided a detailed study of that system of equations. Additional
study of the system of integro-differential equations \cite{FMIa} is
given in Avrachenkov et al. \cite{Avrachenkov et al}. The stationary
queue-length distributions and heavy-traffic behavior for Markovian
DPS system have been studied by Rege and Sengupta \cite{Rege and
Sengupta}. The similar analysis for the phase-type service DPS
system has been provided by Verloop, Ayesta and N\'{u}\~{n}ez-Queija
\cite{Verloop et al}, who also established state-space collapse
property for the heavy-traffic behavior. Bonald and Prouti\`{e}re
\cite{BonaldProutiere1}, \cite{BonaldProutiere2} and
\cite{BonaldProutiere3} provided intensive study of a certain class
of PS systems. They classified those systems and studied their
important properties such as insensitivity and balance properties as
well as established certain bounds for so-called monotonic PS
networks that include DPS systems as a particular case. For other
known results in the area of DPS systems see also the review papers
by Altman, Avrachenkov and Ayesta \cite{Altman et al} and Aalto et
al. \cite{Aalto et al}, and for recent results related to large
deviation of monotonic PS networks that include DPS system see
\cite{JonckheereLopez}.

The present paper contains two important results. The first result
is a simple characterization theorem telling us about the
possibility to represent their stationary probabilities in closed
form. Characterization of queueing system is an established area in
queueing theory. Most of the results of this theory are associated
with inverse problems (see e.g. the book by  Kalashnikov and Rachev
\cite{Kalashnikov}; see also \cite{Abramov} for one of recent
results).

The second one is an asymptotic theorem on the tail behavior of the
stationary probabilities, when the numbers flows in the system is
large. For review of the different approaches the light tail
asymptotics see \cite{Miyazawa}. For a recent study of tail
asymptotics in PS queueing systems see \cite{Robert} and that in
priority queueing systems see \cite{Li}.

 Up
to this time, the important properties of the stationary
distributions of DPS systems have been studied with the aid of the
vector-valued $z$-transforms having a complicated form \cite{Rege
and Sengupta}, \cite{Verloop et al}. Such an approach is
straightforward, and it makes the analysis of the system
characteristics cumbersome. Unlike many papers in this area
(including aforementioned ones \cite{Rege and Sengupta},
\cite{Verloop et al}), the present paper does not use the
traditional $z$-transform method. It is based on a direct study of
the system of equations for this system.

Our approach uses the same bounds as those in Bonald and
Prouti\`{e}re \cite{BonaldProutiere3}. We prove that these bounds
asymptotically dominate the stationary probabilities in the DPS
system. Then, on the basis of these bounds we obtain the tail
asymptotics for the stationary probabilities of the DPS system.

Throughout the paper, empty sums are assumed to be set to zero and
empty products to one.

The rest of the paper is organized as follows. In Section
\ref{notation}, we describe the system, introduce notation and
formulate the results of the paper. In Section \ref{Proofs}, we
introduce necessary concepts and prove the main results of the
paper. In Section \ref{Num}, we define the most likely direction of
the process when the number of flows in the system is large and
provide its numerical study. In Section \ref{Conclusion}, we
conclude the paper and formulate an open problem.


\section{Description of the system, notation and main results}\label{notation}

Consider single server queueing system with $I$ classes of flows.
Flows of the $i$th class ($i$-flows) arrive in the system according
to an ordinary Poisson process with rate $\lambda_i$. The nominated
service time distribution of an $i$-flow is exponential with
parameter $\mu_i$. Denote the load parameter of $i$-flows by
$\rho_i=\frac{\lambda_i}{\mu_i}$, and assume
$\rho=\rho_1+\rho_2+\ldots+\rho_I<1$. All flows presented in the
system are served simultaneously, and share the service according to
the DPS policy with the vector $\mathbf{g}=(g_1,g_2,\ldots,g_I)$.
The word \textit{nominated} means that each single $i$-flow in the
system, that does not share its service, is being served
exponentially with parameter $\mu_i$ unless new arrival in the
system does not occur, and occasionally its service can be finished
before a new arrival. The assumption $\rho<1$ means that the system
is stable.

Let $\mathbf{Q}(t)=(Q_1(t),Q_2(t),\ldots,Q_I(t))$ denote the
vector-valued queue-length process at time $t$, where $Q_i(t)$
denotes the number of $i$-flows in the system are being served at
time $t$, and let
$P_{\mathbf{n}}=\lim_{t\to\infty}\mathsf{P}\{\mathbf{Q}(t)=\mathbf{n}\}$,
where $\mathbf{n}=(n_1,n_2,\ldots,n_I)$ is an integer-valued vector.
For a stable system, the last limit exists.

Throughout the paper we also use the following notation:
\begin{eqnarray*}
\mathbf{0}&=&(0,0,\ldots,0) - I\text{-dimensional vector of zeros},\\
\mathbf{1}&=&(1,1,\ldots,1) - I\text{-dimensional vector of ones},\\
\mathbf{1}_i&=&(\underbrace{0,\ldots,0}_{i-1\quad \text{zeroes}},1,
\underbrace{0,\ldots,0}_{I-i\quad \text{zeros}}),\\
\big<\mathbf{n},\mathbf{g}\big>&=&n_1g_1+n_2g_2+\ldots+n_Ig_I,\\
|\mathbf{n}|&=&\big<\mathbf{n},\mathbf{1}\big> \ = \
n_1+n_2+\ldots+n_I,\\
|\mathbf{n}|_i&=&n_1+n_2+\ldots+n_i; \quad
(|\mathbf{n}|_I=|\mathbf{n}|, |\mathbf{n}|_0=0).
\end{eqnarray*}
The inequality between the vectors is understood as the
componentwise inequalities. For example, $\mathbf{n}\geq\mathbf{0}$
means that all components of a vector $\mathbf{n}$ are nonnegative;
$\mathbf{n}>\mathbf{0}$ means that in a nonnegative vector
$\mathbf{n}$ there is at least one strictly positive component. A
vector $\mathbf{n}$ is said to be \textit{separated from zero} if
$n_i>0$ for all $i=1,2,\ldots,I$. The set of all vectors that are
separated from zero is denoted by $\mathcal{N}$.

Let ${\bf \Gamma}=(\gamma_1,\gamma_2,\ldots,\gamma_I)$ be a vector
of positive real numbers (vector of the direction). A vector
${\bf\Gamma}$ is called \textit{normalized} if
$\gamma_1+\gamma_2+\ldots+\gamma_I=1$. In the sequel, all vectors
${\bf\Gamma}$ considered in the paper are assumed to be normalized.

For $N=0,1,\ldots$ and a given vector of the direction
${\bf\Gamma}$, the set of the vectors $(\lfloor N\gamma_1\rfloor,
\lfloor N\gamma_2\rfloor,\ldots, \lfloor N\gamma_I\rfloor)$, where
for any real $a$, the symbol $\lfloor a\rfloor$ denotes the integer
part of $a$, is denoted $\mathcal{N}_{\bf\Gamma}$. Let $\mathcal{G}$
be an infinite set of directions ${\bf\Gamma}$ containing an
interior. We define the cone $
\mathcal{C}(\mathcal{G})=\cup_{{\bf\Gamma}\in\mathcal{G}}\mathcal{N}_{\bf\Gamma}
$.

For a positive integer $n$, denote
$\mathcal{N}_{{\bf\Gamma},n}=\left\{\right.\mathbf{n}\in\mathcal{N}_{\bf\Gamma}:
\mathbf{n}\geq (\lfloor n\gamma_1\rfloor,$ $\lfloor
n\gamma_2\rfloor,\ldots,$ $\lfloor n\gamma_I\rfloor)\left.\right\}$.

For a direction $\bf\Gamma$, let $n_{\bf\Gamma}$ be an indexed
integer number. Denote $\mathcal{N}(\mathcal{G})=\{n_{\bf\Gamma}:
{\bf\Gamma}\in\mathcal{G}\}$, and define the set
$\mathcal{C}(\mathcal{G},\mathcal{N}(\mathcal{G}))
=\cup_{{\bf\Gamma}\in\mathcal{G}}\mathcal{N}_{{\bf\Gamma},n_{\bf\Gamma}}$.

A vector
$\mathbf{n}\in\mathcal{C}(\mathcal{G},\mathcal{N}(\mathcal{G}))$ is
called \textit{boundary vector} of
$\mathcal{C}(\mathcal{G},\mathcal{N}(\mathcal{G}))$, if there exists
integer $i$, $i=1,2,\ldots,I$, such that $\mathbf{n}-\mathbf{1}_i$
does not belong to the set
$\mathcal{C}(\mathcal{G},\mathcal{N}(\mathcal{G}))$. The set of all
pairs $\{\mathbf{n},i\}$ where $\mathbf{n}$ is a boundary vector of
$\mathcal{C}(\mathcal{G},\mathcal{N}(\mathcal{G}))$ and
$\mathbf{n}-\mathbf{1}_i$ does not belong to the set
$\mathcal{C}(\mathcal{G},\mathcal{N}(\mathcal{G}))$ is denoted by
$\mathcal{C}^0(\mathcal{G},\mathcal{N}(\mathcal{G}))$.

In addition, for any integer $N$ and vector of direction
${\bf\Gamma}$, the following notation $\lfloor
N{\bf\Gamma}\rfloor=(\lfloor N\gamma_1\rfloor, \lfloor
N\gamma_2\rfloor,\ldots, \lfloor N\gamma_I\rfloor)$ is used.

\begin{defn}
The stationary probabilities of the vector valued queueing process
$\mathbf{Q}(t)$ are said to be presented in \textit{closed product
geometric form} if
\begin{eqnarray}
P_{\mathbf{n}}&=&(1-\rho)F(\mathbf{n},\mathbf{g})\prod_{i=1}^I\rho_i^{n_i},
\
\mathbf{n}> \mathbf{0},\label{eq-d1}\\
P_{\mathbf{0}}&=&1-\rho,\label{eq-d2}
\end{eqnarray}
for some function $F(\mathbf{n},\mathbf{g})$ depending only on the
vectors $\mathbf{n}$ and $\mathbf{g}$ (and hence independent of the
vector-valued parameters $(\lambda_1$, $\lambda_2$, \ldots,
$\lambda_I$) and $(\mu_1$, $\mu_2$, \ldots, $\mu_I$)).
\end{defn}

\begin{thm}\label{thm-3}
The stationary probabilities $P_{\mathbf{n}}$ of the DPS queueing
system can be represented in closed product geometric form if and
only if the components of the vector $\mathbf{g}$ all are equal,
that is, in the only case of Egalitarian PS system.
\end{thm}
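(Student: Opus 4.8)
The plan is to prove the nontrivial implication: assuming the closed product geometric form, deduce $g_1=\cdots=g_I$. The reverse implication is the classical fact that the Egalitarian PS queue is a symmetric (Kelly-type) queue, whose stationary law is $P_{\mathbf n}=(1-\rho)\frac{|\mathbf n|!}{n_1!\cdots n_I!}\prod_i\rho_i^{n_i}$; here $F(\mathbf n,\mathbf g)=|\mathbf n|!/\prod_i n_i!$ is indeed independent of $\mathbf g$, so the form holds and $F(\mathbf 0,\mathbf g)=1$ recovers \eqref{eq-d2}. I would record this briefly and then concentrate on the converse. First I would write the global balance equations of the underlying CTMC. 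At a state $\mathbf n>\mathbf 0$ the class-$i$ departure rate is $n_i\mu_i g_i/\langle\mathbf n,\mathbf g\rangle$, so balance reads
$$\Big(\sum_i\lambda_i+\sum_i\frac{n_i\mu_i g_i}{\langle\mathbf n,\mathbf g\rangle}\Big)P_{\mathbf n}=\sum_i\lambda_iP_{\mathbf n-\mathbf 1_i}+\sum_i\frac{(n_i+1)\mu_i g_i}{\langle\mathbf n,\mathbf g\rangle+g_i}P_{\mathbf n+\mathbf 1_i}.$$

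I would then substitute the postulated form $P_{\mathbf n}=(1-\rho)F(\mathbf n,\mathbf g)\prod_i\rho_i^{n_i}$, use $\lambda_i=\rho_i\mu_i$, and cancel the common factor $(1-\rho)\prod_i\rho_i^{n_i}$. The powers $\rho_i^{\pm1}$ arising from the shifts $\mathbf n\mp\mathbf 1_i$ recombine with $\lambda_i$ and $\mu_i$, and the identity becomes a linear form in the parameters $(\lambda_1,\dots,\lambda_I,\mu_1,\dots,\mu_I)$ whose coefficients involve only $\mathbf n$, $\mathbf g$ and the values $F(\mathbf n,\mathbf g)$ and $F(\mathbf n\pm\mathbf 1_i,\mathbf g)$. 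The decisive step uses the hypothesis that $F$ does not depend on $(\lambda,\mu)$: since the identity holds on the whole open stability region $\{\lambda_i,\mu_i>0:\sum_i\lambda_i/\mu_i<1\}$ and its coefficients are constant with respect to these $2I$ free parameters, every coefficient must vanish. Equating the coefficients of $\lambda_i$ and of $\mu_i$ separately yields the single first-order recursion
$$F(\mathbf n-\mathbf 1_i,\mathbf g)=\frac{n_i g_i}{\langle\mathbf n,\mathbf g\rangle}\,F(\mathbf n,\mathbf g),\qquad i=1,\dots,I,$$
valid for all interior $\mathbf n$ (I restrict to $\mathbf n$ with every $n_k$ large, so that $\mathbf n$ and all neighbours appearing are $>\mathbf 0$ and hence genuinely in product form; this causes no loss, since the compatibility condition below is needed only there).

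Finally I would exploit that this is an overdetermined system: the $I$ recursions must be mutually compatible for a single function $F$ to exist, a discrete path-independence (exactness) requirement. Computing $F(\mathbf n-\mathbf 1_i-\mathbf 1_j,\mathbf g)$ in the two orders — decrement $i$ then $j$, versus $j$ then $i$ — and applying the recursion each time gives
$$\frac{n_ig_i\,n_jg_j}{(\langle\mathbf n,\mathbf g\rangle-g_i)\langle\mathbf n,\mathbf g\rangle}=\frac{n_jg_j\,n_ig_i}{(\langle\mathbf n,\mathbf g\rangle-g_j)\langle\mathbf n,\mathbf g\rangle}.$$
Since $n_i,n_j\ge1$, the common factor $n_ig_in_jg_j/\langle\mathbf n,\mathbf g\rangle$ is nonzero and may be cancelled, forcing $\langle\mathbf n,\mathbf g\rangle-g_i=\langle\mathbf n,\mathbf g\rangle-g_j$, i.e.\ $g_i=g_j$. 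As $i,j$ are arbitrary, all weights coincide and the system is Egalitarian.

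The main obstacle is not the algebra but the clean justification of the coefficient matching: one must argue that the independence of $F$ from $(\lambda,\mu)$ turns the balance relation into an affine identity in these parameters over an open set, so that coefficients may legitimately be equated. Once the recursion is isolated, the compatibility argument producing $g_i=g_j$ is short, and care is needed only to apply it at states deep in the interior, where \eqref{eq-d1} is in force for every state involved.
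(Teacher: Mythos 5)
Your proposal is correct and follows essentially the same route as the paper: substitute the product form into the balance equations, use the independence of $F$ from $(\lambda,\mu)$ to equate coefficients of $\lambda_i$ and $\mu_i$, isolate the first-order recursion $F(\mathbf{n}+\mathbf{1}_i,\mathbf{g})=\frac{\langle\mathbf{n}+\mathbf{1}_i,\mathbf{g}\rangle}{(n_i+1)g_i}F(\mathbf{n},\mathbf{g})$, and then extract $g_i=g_j$ from path-independence (compatibility) of this overdetermined recursion. The only cosmetic difference is that the paper runs the two-path comparison at the states $\mathbf{1}_i+\mathbf{1}_l$ adjacent to the origin while you run it at interior states, which changes nothing of substance.
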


For the formulation of the next main theorem, we introduce the
following notation. For $i=1,2,\ldots,I-1$ and positive real numbers
$\gamma_1$, $\gamma_2$, \ldots, $\gamma_I$
($\sum_{i=1}^I\gamma_i=1$) set
\begin{equation}\label{eq-d11}
\begin{aligned}
\theta^{(1)}_i=&
\exp\left\{\sum_{j=1}^{I-i}\left(\frac{g_i}{g_{i+j}}-1\right)
\ln\left(1+\frac{\gamma_{i+j}g_{i+j}}{\sum_{k=1}^{i+j-1}\gamma_kg_k}\right)\right\}
\end{aligned}
\end{equation}
and for $i=I$ set $\theta^{(1)}_I=1$. Similarly, for $i=1$ set
$\theta^{(2)}_1=1$ and for $i=2,3,\ldots,I$ and the same positive
real numbers $\gamma_1$, $\gamma_2$, \ldots, $\gamma_I$ set
\begin{equation}\label{eq-d12}
\begin{aligned}
\theta^{(2)}_i=&\exp\left\{\sum_{j=1}^{i-1}\left(\frac{g_i}{g_{j}}-1\right)
\ln\left(1+\frac{\gamma_{j}g_{j}}
{\sum_{k=j+1}^I\gamma_kg_k}\right)\right\}.
\end{aligned}
\end{equation}

For $i=1,2,\ldots,I$, introduce the following values:
\begin{eqnarray}
\Delta_i^{(1)}&=&\frac{\gamma_1g_1+\gamma_2g_2+\ldots+\gamma_Ig_I}{\gamma_ig_i}
\cdot{\theta_i^{(1)}}\cdot{\rho_i},\label{eq-d14}\\
\Delta_i^{(2)}&=&\frac{\gamma_1g_1+\gamma_2g_2+\ldots+\gamma_Ig_I}{\gamma_ig_i}
\cdot{\theta_i^{(2)}}\cdot{\rho_i}.\label{eq-d15}
\end{eqnarray}

For integer parameter $N$, we set $n_1=\lfloor N\gamma_1\rfloor$,
$n_2=\lfloor N\gamma_2\rfloor$, \ldots, $n_I=\lfloor
N\gamma_I\rfloor$. Then $\mathbf{n}(N)=\lfloor
N{\bf\Gamma}\rfloor=\{\lfloor N\gamma_1\rfloor, \lfloor
N\gamma_2\rfloor, \ldots, \lfloor N\gamma_I\rfloor\}$.

The following theorem describes the asymptotic behavior of the
stationary probabilities.
\begin{thm}\label{thm-4}
Assume that $g_1<g_2<\ldots<g_I$, and
\begin{equation}\label{eq-l7}
\Delta_i^{(2)}<1, \quad i=1,2,\ldots,I.
\end{equation}
Then, as $N\to\infty$,
\begin{equation*}\label{eq-l9}
\begin{aligned}
\lim_{N\to\infty}\frac{P_{\lfloor
N{\bf\Gamma}\rfloor+\mathbf{1}_i}}{P_{\lfloor
N{\bf\Gamma}\rfloor}}=\Delta_i=\frac
{\Delta_i^{(1)}-c\Delta_i^{(2)}}{1-c},
\end{aligned}
\end{equation*}
$$
c=\frac{\sum_{i=1}^I\left[\mu_i{\gamma_ig_i}{\big(\sum_{l=1}^I
\gamma_lg_l\big)^{-1}}(\Delta_i^{(1)}-1)
-\lambda_i\big(1-\big(\Delta_i^{(1)}\big)^{-1}\big)\right]}{\sum_{i=1}^I
\left[\mu_i{\gamma_ig_i}
{\big(\sum_{l=1}^I\gamma_lg_l\big)^{-1}}(\Delta_i^{(2)}-1)
-\lambda_i\big(1-\big(\Delta_i^{(2)}\big)^{-1}\big)\right]}.
$$
\end{thm}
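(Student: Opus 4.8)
The plan is to work directly with the global balance equations through the one-step ratios $R_i(\mathbf{n})=P_{\mathbf{n}+\mathbf{1}_i}/P_{\mathbf{n}}$, and to show that along the ray $\mathbf{n}(N)=\lfloor N{\bf\Gamma}\rfloor$ these ratios converge to the asserted $\Delta_i$. First I would write the balance equation at an interior state $\mathbf{n}\in\mathcal{N}$ and divide by $P_{\mathbf{n}}$, obtaining
\begin{equation*}
\begin{aligned}
\sum_{i=1}^I\lambda_i + \sum_{i=1}^I \frac{\mu_i n_i g_i}{\big<\mathbf{n},\mathbf{g}\big>}
={}& \sum_{i=1}^I\frac{\lambda_i}{R_i(\mathbf{n}-\mathbf{1}_i)}\\
&+ \sum_{i=1}^I \frac{\mu_i(n_i+1)g_i}{\big<\mathbf{n}+\mathbf{1}_i,\mathbf{g}\big>}\,R_i(\mathbf{n}).
\end{aligned}
\end{equation*}
Along $\mathbf{n}(N)$ both service coefficients $\mu_i n_i g_i/\big<\mathbf{n},\mathbf{g}\big>$ and $\mu_i(n_i+1)g_i/\big<\mathbf{n}+\mathbf{1}_i,\mathbf{g}\big>$ converge to $d_i:=\mu_i\gamma_ig_i\big(\sum_{l}\gamma_lg_l\big)^{-1}$, so every limit point $\{\Delta_i\}$ of $\{R_i(\mathbf{n}(N))\}$ must satisfy the single \emph{characteristic equation}
\begin{equation*}
\Phi\big(\{\Delta_i\}\big):=\sum_{i=1}^I\Big[\frac{\mu_i\gamma_ig_i}{\sum_{l=1}^I\gamma_lg_l}\,(\Delta_i-1)-\lambda_i\big(1-\Delta_i^{-1}\big)\Big]=0 .
\end{equation*}

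Second, I would produce the two reference solutions $\{\Delta_i^{(1)}\}$ and $\{\Delta_i^{(2)}\}$ from the Bonald--Prouti\`ere comparison. These arise from the product-form super- and sub-solutions obtained by building the state up one flow at a time in the two canonical orders (classes $1,2,\dots,I$ for the bound carrying $\theta^{(1)}$ in \eqref{eq-d11}, and $I,I-1,\dots,1$ for the one carrying $\theta^{(2)}$). For the increasing order the build-up weight is $\prod_i\rho_i^{n_i}\prod_{k=1}^{n_i}\big(\sum_{l<i}n_lg_l+kg_i\big)/(kg_i)$; increasing $n_i$ by one and evaluating along $\mathbf{n}(N)$, the first factor tends to $\big(\sum_{l\le i}\gamma_lg_l\big)/(\gamma_ig_i)$, while the remaining double product contributes, by the Euler--Maclaurin replacement of $\sum_k\ln\!\big(1+g_i/(\sum_{l<j}n_lg_l+kg_j)\big)$ with $\frac{g_i}{g_j}\ln\!\big(1+\gamma_jg_j/\sum_{k<j}\gamma_kg_k\big)$, the factor $\exp\big(\sum_{j>i}\frac{g_i}{g_j}\ln(1+\gamma_jg_j/\sum_{k<j}\gamma_kg_k)\big)$. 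Telescoping the surplus term reproduces exactly $\Delta_i^{(1)}$ of \eqref{eq-d14}, and the reversed order gives $\Delta_i^{(2)}$ of \eqref{eq-d15}. Here $g_1<g_2<\cdots<g_I$ fixes which order is the super- and which the sub-solution (one checks $\theta_i^{(1)}<1<\theta_i^{(2)}$, hence $\Delta_i^{(1)}<\Delta_i^{(2)}$), and \eqref{eq-l7} guarantees the sub-solution is summable.

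Third, I would combine the two ingredients. The comparison sandwiches the stationary probabilities, so every limit point of $\{R_i(\mathbf{n}(N))\}$ lies on the segment joining $\{\Delta_i^{(1)}\}$ and $\{\Delta_i^{(2)}\}$, i.e. has the form $\Delta_i=(\Delta_i^{(1)}-c\,\Delta_i^{(2)})/(1-c)$ for a single scalar $c$ common to all $i$. Substituting this one-parameter family into the characteristic equation and using that the death-rate part of $\Phi$ is affine in $\{\Delta_i\}$, the admissible root along the segment is located by the linear interpolation $\Phi(\{\Delta_i^{(1)}\})-c\,\Phi(\{\Delta_i^{(2)}\})=0$, that is $c=\Phi^{(1)}/\Phi^{(2)}$ with $\Phi^{(k)}:=\Phi(\{\Delta_i^{(k)}\})$; writing out $\Phi^{(1)},\Phi^{(2)}$ with $\lambda_i=\mu_i\rho_i$ gives precisely the quotient displayed in the statement. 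Monotonicity together with \eqref{eq-l7} yields $c<0$, so that $1-c\neq0$ and $\Delta_i$ is a genuine convex combination of $\Delta_i^{(1)}$ and $\Delta_i^{(2)}$, in particular positive.

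The hard part will be the third step: proving that the limit genuinely exists and is governed by a \emph{single} interpolation parameter $c$, the same for every coordinate $i$, rather than by $I$ independent ones, and that the linear interpolation of $\Phi$ along the segment captures the exact limit and not merely its leading order. This is where the full strength of the comparison is needed, via the cone $\mathcal{C}(\mathcal{G},\mathcal{N}(\mathcal{G}))$ and its boundary set $\mathcal{C}^0(\mathcal{G},\mathcal{N}(\mathcal{G}))$: one must show that the true tail is asymptotically pinched between the two bounding geometrics uniformly in the direction, so that the characteristic equation selects the unique admissible interpolation and the convergence of $R_i(\mathbf{n}(N))$ holds along the whole ray rather than merely along subsequences. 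By comparison, the remaining work — the Euler--Maclaurin evaluation of the $\theta$-products and the algebra producing $\Delta_i$ — is routine.
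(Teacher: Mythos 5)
Your first two steps set up the same objects as the paper (the two order-based product measures and the limits $\Delta_i^{(1)},\Delta_i^{(2)}$ of their one-step ratios, exactly as in Lemma~\ref{lem-3}), but your third step contains a genuine algebraic error, and it is precisely the step that produces the theorem's formula. The function $\Phi(\{\Delta_i\})=\sum_{i=1}^I\big[\mu_i\gamma_ig_i\big(\sum_l\gamma_lg_l\big)^{-1}(\Delta_i-1)-\lambda_i(1-\Delta_i^{-1})\big]$ is \emph{not} affine in $\{\Delta_i\}$: the service part is, but the terms $\lambda_i\Delta_i^{-1}$ are strictly convex. Hence restricting $\Phi$ to the family $\Delta_i(c)=(\Delta_i^{(1)}-c\Delta_i^{(2)})/(1-c)$ and setting it to zero is \emph{not} equivalent to the ``linear interpolation'' $\Phi^{(1)}-c\,\Phi^{(2)}=0$; that equivalence would hold only for affine $\Phi$. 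Concretely: take $I=2$, $g_1=1$, $g_2=2$, $\gamma_1=\gamma_2=1/2$, $\mu_1=\mu_2=1$, $\rho_1=0.1$, $\rho_2=0.2$. Then $\Delta^{(1)}=(0.1732,\,0.3)$, $\Delta^{(2)}=(0.3,\,0.45)$, $\Phi^{(1)}\approx 0.2018$, $\Phi^{(2)}\approx -0.1222$, so $c\approx -1.651$ and the theorem's limits are $\Delta\approx(0.252,\,0.393)$; but $\Phi(\Delta)\approx -0.049\neq 0$. So your two structural claims --- every limit point lies on the segment joining $\Delta^{(1)}$ and $\Delta^{(2)}$, and every limit point solves the characteristic equation --- are mutually incompatible with the formula you are trying to prove: carried out honestly, your method would select the actual root of $\Phi$ along the segment, which is a \emph{different} point, and you would end up contradicting rather than proving the statement. (This tension is worth registering: your step 1 is a legitimate consequence of the balance equations whenever the backward ratios converge to $1/\Delta_i$, so it in fact raises a consistency question about the combination of the theorem with \eqref{eq-h45}; but as a route to the stated $c$ it fails.)

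The paper's own proof never forms a characteristic equation and never solves for $c$; the constant has an entirely different origin there. Lemmas~\ref{lem-2} and \ref{lem-5} show that the summed balance defect of $P^{(1)}$ is positive and that of $P^{(2)}$ is negative on the cone, so for each $\mathbf{n}$ there is a weight $\beta_{\mathbf{n}}\in(0,1)$ making the convex combination of the two defects vanish \eqref{eq-h12}; by linearity of the operators $\mathcal{J}$ this means the frozen mixture $\beta_{\mathbf{n}}P^{(1)}+(1-\beta_{\mathbf{n}})P^{(2)}$ satisfies the same local system as $P$, and a continuity/uniqueness argument between the systems \eqref{eq-h23} and \eqref{eq-h24} transfers the ratio limits of the mixture to $P$ \eqref{eq-h13}--\eqref{eq-h15}. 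Then Proposition~\ref{prop-2} (resting on Proposition~\ref{prop-1} and the Stirling-type expansions of Lemma~\ref{lem-4}, which make the factors $(\Delta_i^{(1)}/\Delta_i^{(2)})^{\lfloor N\gamma_i\rfloor}$ cancel) shows $\beta_{\lfloor N{\bf\Gamma}\rfloor}\to 1$ with $(1-\beta_{\lfloor N{\bf\Gamma}\rfloor})P^{(2)}_{\lfloor N{\bf\Gamma}\rfloor}/P^{(1)}_{\lfloor N{\bf\Gamma}\rfloor}\to-\Phi^{(1)}/\Phi^{(2)}=-c$. The theorem's expression is then the limit of the mixture ratio $\big(\Delta_i^{(1)}+x_N\Delta_i^{(2)}\big)/(1+x_N)$ with $x_N\to -c$: a weighted average whose weights are the sizes of the two nonzero balance defects $\Phi^{(1)}>0>\Phi^{(2)}$, not a zero of $\Phi$. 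Finally, what you call the ``hard part'' --- that a single scalar governs all coordinates and that the limit exists along the whole ray --- is exactly what the paper's cone construction, boundary data and mixture representation are built to deliver; your proposal defers it entirely, and even granting it, your interpolation step would not yield the stated $c$.
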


\begin{cor}\label{cor-1}
Under the assumptions of Theorem \ref{thm-4}
$$
\lim_{N\to\infty}\frac{\ln P_{\lfloor
N{\bf\Gamma}\rfloor}}{N}=\sum_{i=1}^I\gamma_i\ln\Delta_i.
$$
\end{cor}

\section{Proofs of the main results}\label{Proofs}

\subsection{Preliminaries}

In the following, the fractions in which both the numerator and
denominator are equal to zero, are set to zero. Specifically, the
fractions $\frac{n_i}{\big<\mathbf{n},\mathbf{g}\big>}$ in which
$\mathbf{n}=\mathbf{0}$ are set to zero. The system of linear
equations for the stationary probabilities $P_\mathbf{n}$,
$\mathbf{n}\geq\mathbf{0}$, which follows from the system of the
Chapman-Kolmogorov equations, is
\begin{equation}\label{eq-c2}
\sum_{i=1}^I\left[\frac{\mu_ig_i(n_i+1)}{\big<\mathbf{n}+\mathbf{1}_i,
\mathbf{g}\big>}P_{\mathbf{n}+\mathbf{1}_i}-\left(\lambda_i+
\frac{\mu_ig_in_i}{\big<\mathbf{n},
\mathbf{g}\big>}\right)P_{\mathbf{n}}+\lambda_iP_{\mathbf{n}-\mathbf{1}_i}\right]=0
\end{equation}
(see \cite{Rege and Sengupta}), where
$P_{\mathbf{n}-\mathbf{1}_i}=0$ in the case where the vector
$\mathbf{n}-\mathbf{1}_i$ is not nonnegative.

For the further study, it is convenient to introduce the operators
$$\mathcal{J}_{i,\mathbf{n}}(X,Y)=\frac{\mu_i g_i n_i}
{\big<\mathbf{n},\mathbf{g}\big>}X-\lambda_iY,
$$
where $X$ and $Y$ are real numbers. Then, \eqref{eq-c2} can be
rewritten in the form
\begin{equation}\label{eq-c4}
\sum_{i=1}^I\left[\mathcal{J}_{i,\mathbf{n}+\mathbf{1}_i}(P_{\mathbf{n}+\mathbf{1}_i},
P_{\mathbf{n}})-
\mathcal{J}_{i,\mathbf{n}}(P_{\mathbf{n}},P_{\mathbf{n}-\mathbf{1}_i})\right]=0.
\end{equation}

\subsection{Proof of Theorem \ref{thm-3}} 
We first obtain properties that the function
$F(\mathbf{n},\mathbf{g})$ must satisfy to be a solution of
\eqref{eq-d1}.  Substituting \eqref{eq-d1} into \eqref{eq-c2} and
canceling the (non-zero) factor $\prod_{l=1}^I\rho_l^{n_l}$ yields
\begin{align}\label{eq-d3}
&\sum_{i=1}^I\frac{\mu_ig_in_i}{\big<\mathbf{n},\mathbf{g}\big>}F(\mathbf{n},\mathbf{g})
-
\sum_{i=1}^I\frac{\mu_ig_i(n_i+1)}{\big<\mathbf{n}+\mathbf{1}_i,\mathbf{g}\big>}
F(\mathbf{n}+\mathbf{1}_i,\mathbf{g})\rho_i
\nonumber\\
&=\sum_{i=1}^I\lambda_i{F}(\mathbf{n}-\mathbf{1}_i,\mathbf{g})\frac{1}{\rho_i}
-\sum_{i=1}^I\lambda_iF(\mathbf{n},\mathbf{g}),
\end{align}
where ${F}(\mathbf{n}-\mathbf{1}_i,\mathbf{g})=0$ in the case where
the vector $\mathbf{n}-\mathbf{1}_i$ is \textit{not} nonnegative.
Equation \eqref{eq-d3} is equivalent to
\begin{align*}\label{eq-d4*}
&\sum_{i=1}^I\frac{\mu_ig_in_i}{\big<\mathbf{n},\mathbf{g}\big>}F(\mathbf{n},\mathbf{g})
-
\sum_{i=1}^I\lambda_i\frac{g_i(n_i+1)}{\big<\mathbf{n}+\mathbf{1}_i,\mathbf{g}\big>}
F(\mathbf{n}+\mathbf{1}_i,\mathbf{g})
\\
&=\sum_{i=1}^I\mu_i{F}(\mathbf{n}-\mathbf{1}_i,\mathbf{g})
-\sum_{i=1}^I\lambda_iF(\mathbf{n},\mathbf{g}). \nonumber
\end{align*}
Since this must hold for all values of $\lambda_i$ and all values of
$\mu_i$, we can equate the coefficients of $\lambda_i$ and $\mu_i$
to obtain that $F(\mathbf{n},\mathbf{g})$ must, for all $i$ and all
nonnegative vectors $\mathbf{n}$, satisfy the recurrence relation
\begin{equation}\label{eq-d10}
  F(\mathbf{n}+\mathbf{1}_i,\mathbf{g})
= \frac{\big<\mathbf{n}+\mathbf{1}_i,\mathbf{g}\big>}{(n_i+1)g_i}
  F(\mathbf{n},\mathbf{g}),
\end{equation}
where $F(\mathbf{0},\mathbf{g})$ is the initial positive value for
the recurrence relation of \eqref{eq-d10}.

We prove now that representation \eqref{eq-d1} is correct if and
only if the components of the vector $\mathbf{g}$ all are equal. Set
$F(\mathbf{0},\mathbf{g})=C$, where $C$ is a positive constant
depending on the vector $\mathbf{g}$, and assume, to obtain a
contradiction, that there exist $i$ and $l$ such that $g_i\neq g_l$.
By \eqref{eq-d10}, for $F(\mathbf{n},\mathbf{g})$ to satisfy
\eqref{eq-d1}, we require $
F(\mathbf{1}_i,\mathbf{g})=F(\mathbf{0},\mathbf{g})\frac{g_i}{g_i}=C,
$ and $
F(\mathbf{1}_i+\mathbf{1}_l,\mathbf{g})=F(\mathbf{1}_i,\mathbf{g})
\frac{g_i+g_l}{g_l}=C\frac{g_i+g_l}{g_l}. $ On the other hand, by
the similar way we obtain
$F(\mathbf{1}_i+\mathbf{1}_l,\mathbf{g})=C\frac{g_i+g_l}{g_i}$ if we
first find $F(\mathbf{1}_l,\mathbf{g})=C$, and then
$F(\mathbf{1}_i+\mathbf{1}_l,\mathbf{g})=F(\mathbf{1}_l,\mathbf{g})
\frac{g_i+g_l}{g_i}=C\frac{g_i+g_l}{g_i}$. This is only correct when
$g_i=g_l$ and, hence, it contradicts to the assumption that $g_i\neq
g_l$. Hence, the function $F(\mathbf{0},\mathbf{g})$ is not uniquely
defined if the vector $\mathbf{g}$ has distinct components.

So, we arrived at the contradiction, which proves that the only
equality $g_i=g_l$ must be valid. Hence, the function
$F(\mathbf{n},\mathbf{g})$ is well-defined if and only if the vector
$\mathbf{g}$ has identical components. This finishes the proof.

\begin{rem} It is readily seen from \eqref{eq-c2} that
the case $g_1=g_2=\ldots=g_I\equiv g$ reduces to the case
$g_1=g_2=\ldots=g_I\equiv 1$, which corresponds to Egalitarian PS
systems. Here we have
$$
F(\mathbf{n},\mathbf{1})=\frac{|\mathbf{n}|!}{n_1!n_2!\cdot\ldots\cdot
n_I!},
$$
and for the stationary distributions we have:
\begin{equation}\label{eq-d4}
P_{\mathbf{n}}=(1-\rho)\frac{|\mathbf{n}|!}{n_1!n_2!\cdot\ldots\cdot
n_I!} \prod_{i=1}^I\rho_i^{n_i}, \quad \mathbf{n}>\mathbf{0}.
\end{equation}
\end{rem}

\begin{rem} It follows from Theorem \ref{thm-3} that
in the only case of Egalitarian PS systems the equality
$\mathcal{J}_{i,\mathbf{n}}(P_{\mathbf{n}},P_{\mathbf{n}-\mathbf{1}_i})=0$
holds for all $i=1,2,\ldots, I$ and all $\mathbf{n}\in\mathcal{N}$.
\end{rem}

\subsection{Proof of Theorem \ref{thm-4} and Corollary \ref{cor-1}}

The proof of Theorem \ref{thm-4} is divided into auxiliary lemmas.
First, we introduce the concepts and notation that are used to prove
Theorem \ref{thm-4}.

\subsubsection{Concepts and notation.}\label{CaN}

For any vector $\mathbf{n}>\mathbf{0}$, let us present the elements
of the vector $\mathbf{g}$ in the following two orders
\begin{equation}\label{eq-g1}
\underbrace{g_1,~g_1,~\ldots,~g_1;}_{n_1 \ \text{times}}
\underbrace{g_2,~g_2,~\ldots,~g_2;}_{n_2 \ \text{times}}\ldots;
\underbrace{g_I,~g_I,~\ldots,~g_I}_{n_I \ \text{times}},
\end{equation}
\begin{equation}\label{eq-g1.1}
\underbrace{g_I,~g_I,~\ldots,~g_I;}_{n_I \ \text{times}}
\underbrace{g_{I-1},~g_{I-1},~\ldots,~g_{I-1};}_{n_{I-1} \
\text{times}}\ldots; \underbrace{g_1,~g_1,~\ldots,~g_1}_{n_1 \
\text{times}}.
\end{equation}
The order in \eqref{eq-g1} is called \textit{forward}, and the order
in \eqref{eq-g1.1} is called \textit{backward}.

For the forward order, denote the sequence of the partial sums by
\begin{equation}\label{eq-d6}
S_{\mathbf{n},1}^{(1)}=g_1, S_{\mathbf{n},2}^{(1)}=2g_1,\ldots,
S_{\mathbf{n},n_1}^{(1)}=n_1g_1,
S_{\mathbf{n},n_1+1}^{(1)}=n_1g_1+g_2, \ldots,
S_{\mathbf{n},|\mathbf{n}|}^{(1)}=\big<\mathbf{n},\mathbf{g}\big>,
\end{equation}
and for the backward order, the sequence of partial sums is denoted
by
\begin{equation}\label{eq-d7}
\begin{aligned}
&S_{\mathbf{i},1}^{(2)}=g_K, S_{\mathbf{i},2}^{(2)}=2g_K,\ldots,
S_{\mathbf{n},n_I}^{(2)}=n_Ig_I,
S_{\mathbf{n},n_I+1}^{(2)}=n_Ig_I+g_{I-1}, \ldots,
S_{\mathbf{n},|\mathbf{n}|}^{(2)}=\big<\mathbf{n},\mathbf{g}\big>.
\end{aligned}
\end{equation}

Introduce the probability mass functions $P_{\mathbf{n}}^{(1)}$ and
$P_{\mathbf{n}}^{(2)}$ as follows:
\begin{equation}\label{eq-d8}
P^{(1)}_{\mathbf{n}}=C^{(1)}\frac{\prod_{l=1}^{|\mathbf{n}|}
S_{\mathbf{n},l}^{(1)}}{n_1!n_2!\cdot\ldots\cdot
n_I!}\prod_{i=1}^I\left(\frac{\rho_i}{g_i}\right)^{n_i},
\end{equation}
and
\begin{equation}\label{eq-d9}
P^{(2)}_{\mathbf{n}}=C^{(2)}
\frac{\prod_{l=1}^{|\mathbf{n}|}S_{\mathbf{n},l}^{(2)}}{n_1!n_2!\cdot\ldots
\cdot n_I!}\prod_{i=1}^I\left(\frac{\rho_i}{g_i}\right)^{n_i},
\end{equation}
where the normalization constants $C^{(1)}$ and $C^{(2)}$ are
$$
C^{(1)}=
\left[\sum_{\mathbf{n}\geq\mathbf{0}}\frac{\prod_{l=1}^{|\mathbf{n}|}
S_{\mathbf{n},l}^{(1)}}{n_1!n_2!\cdot\ldots\cdot n_I!
}\prod_{i=1}^I\left(\frac{\rho_i}{g_i}\right)^{n_i} \right]^{-1},
$$
and
$$
C^{(2)}=
\left[\sum_{\mathbf{n}\geq\mathbf{0}}\frac{\prod_{l=1}^{|\mathbf{n}|}
S_{\mathbf{n},l}^{(2)}}{n_1!n_2!\cdot\ldots\cdot n_I!
}\prod_{i=1}^I\left(\frac{\rho_i}{g_i}\right)^{n_i} \right]^{-1}.
$$

Apparently,
$$
\frac{\prod_{l=1}^{|\mathbf{n}|}
S_{\mathbf{n},l}^{(1)}}{n_1!n_2!\cdot\ldots\cdot
n_I!}\prod_{i=1}^I\left(\frac{\rho_i}{g_i}\right)^{n_i}\leq
\frac{|\mathbf{n}|!}{n_1!n_2!\cdot\ldots\cdot
n_I!}\prod_{i=1}^I{\rho_i}^{n_i},
$$
for all $\mathbf{n}\geq\mathbf{0}$. So, if $\rho<1$, then
$P_{\mathbf{n}}^{(1)}$ is a proper probability mass function with
the normalization constant $C^{(1)}$ satisfying the inequality
$C^{(1)}\geq1-\rho.$ However, $P_{\mathbf{n}}^{(2)}$  is a proper
probability mass function only in the case when the series
\begin{equation}\label{eq-d13}
\sum_{\mathbf{n}\geq\mathbf{0}}\frac{\prod_{l=1}^{|\mathbf{n}|}
S_{\mathbf{n},l}^{(2)}}{n_1!n_2!\cdot\ldots\cdot n_I!
}\prod_{i=1}^I\left(\frac{\rho_i}{g_i}\right)^{n_i}
\end{equation}
converges. We cannot claim the convergence of \eqref{eq-d13} in
general.

\subsubsection{Auxiliary lemmas.}\label{Auxiliary} In Lemmas \ref{lem-2}
and \ref{lem-5} given below, it is assumed that
$P_{\mathbf{n}}^{(2)}$ is a proper probability mass function.

\begin{lem}\label{lem-2} For all $\mathbf{n}\in\mathcal{N}$
we have the following relations:
\begin{eqnarray}
\mathcal{J}_{I,\mathbf{n}}(P_{\mathbf{n}}^{(1)},P_{\mathbf{n}-\mathbf{1}_I}^{(1)})&=&0,
\label{eq-l2}\\
\mathcal{J}_{i,\mathbf{n}}(P_{\mathbf{n}}^{(1)},P_{\mathbf{n}-\mathbf{1}_i}^{(1)})&<&0,
\ i=1,2\ldots,I-1,\label{eq-l3}\\
\mathcal{J}_{1,\mathbf{i}}(P_{\mathbf{n}}^{(2)},P_{\mathbf{n}-\mathbf{1}_1}^{(2)})&=&0,\label{eq-l4}\\
\mathcal{J}_{i,\mathbf{n}}(P_{\mathbf{n}}^{(2)},P_{\mathbf{n}-\mathbf{1}_i}^{(2)})&>&0,
\ i=2,3\ldots,I,\label{eq-l5}
\end{eqnarray}
\end{lem}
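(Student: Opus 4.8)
The plan is to treat all four relations uniformly by reducing each $\mathcal{J}_{i,\mathbf{n}}$ to the sign of a single one‑step ratio. Since $P^{(1)}_{\mathbf{n}-\mathbf{1}_i}>0$ for $\mathbf{n}\in\mathcal{N}$ (note $\mathbf{n}-\mathbf{1}_i\geq\mathbf{0}$ because every coordinate of $\mathbf{n}$ is at least $1$), I would factor it out and write
\[
\mathcal{J}_{i,\mathbf{n}}(P^{(1)}_{\mathbf{n}},P^{(1)}_{\mathbf{n}-\mathbf{1}_i})
=\lambda_i P^{(1)}_{\mathbf{n}-\mathbf{1}_i}
\left[\frac{\mu_ig_in_i}{\lambda_i\big<\mathbf{n},\mathbf{g}\big>}\cdot
\frac{P^{(1)}_{\mathbf{n}}}{P^{(1)}_{\mathbf{n}-\mathbf{1}_i}}-1\right],
\]
so that the sign of $\mathcal{J}$ equals the sign of the bracket; the identical reduction applies to $P^{(2)}$. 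Thus everything comes down to evaluating the ratios $P^{(1)}_{\mathbf{n}}/P^{(1)}_{\mathbf{n}-\mathbf{1}_i}$ and $P^{(2)}_{\mathbf{n}}/P^{(2)}_{\mathbf{n}-\mathbf{1}_i}$.

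Next I would compute the forward ratio from \eqref{eq-d8}. The factorial and geometric factors contribute $\frac{1}{n_i}\cdot\frac{\rho_i}{g_i}$, while the partial‑sum product $\prod_l S^{(1)}_{\mathbf{n},l}=\prod_{j=1}^I\prod_{m=1}^{n_j}(G_{j-1}+mg_j)$, with $G_j=\sum_{k\le j}n_kg_k$, simplifies block by block: blocks $j<i$ cancel, block $i$ leaves the single extra factor $G_i$, and blocks $j>i$ contribute $\prod_m\frac{G_{j-1}+mg_j}{G_{j-1}-g_i+mg_j}$, since deleting one copy of $g_i$ lowers every later prefix by $g_i$. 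Using $\lambda_i=\mu_i\rho_i$ and $\big<\mathbf{n},\mathbf{g}\big>=G_I$, the bracket collapses to
\[
\frac{G_i}{G_I}\prod_{j=i+1}^I\prod_{m=1}^{n_j}
\frac{G_{j-1}+mg_j}{G_{j-1}-g_i+mg_j}-1 .
\]
For $i=I$ the product is empty and $G_I/G_I=1$, giving exactly $0$, which is \eqref{eq-l2}. The same computation in the backward order, with $H_j=\sum_{k\ge j}n_kg_k$, produces the bracket $\frac{H_i}{H_1}\prod_{j=1}^{i-1}\prod_m\frac{H_{j+1}+mg_j}{H_{j+1}-g_i+mg_j}-1$, which vanishes for $i=1$ and yields \eqref{eq-l4}.

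The inequalities \eqref{eq-l3} and \eqref{eq-l5} are the heart of the lemma and the step I expect to be the main obstacle. My plan is to reindex each product by the underlying partial sums. In the forward case let $0=s_0<s_1<\cdots<s_M$ ($M=\sum_{j>i}n_j$) be the partial sums of the tail weights; then the factors $G_{j-1}+mg_j$ run through $G_i+s_p$ and $G_{j-1}-g_i+mg_j$ through $G_i-g_i+s_p$, so the desired inequality $<1$ becomes, after cancelling the common factor $G_i+s_M=G_I$, the statement $\prod_{p=1}^M(G_i+s_{p-1})<\prod_{p=1}^M(G_i-g_i+s_p)$. I would prove this factorwise: each increment $s_p-s_{p-1}$ equals some tail weight $g_j$ with $j>i$, and the hypothesis $g_1<\cdots<g_I$ forces $s_p-s_{p-1}=g_j>g_i$, whence $G_i+s_{p-1}<G_i-g_i+s_p$ term by term (all factors positive since $G_i\ge g_i$ and $s_p>0$). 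Multiplying the $M$ strict inequalities gives \eqref{eq-l3}.

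The backward case \eqref{eq-l5} is the mirror image. Reindexing by the head partial sums $0=t_0<t_1<\cdots<t_{M'}$ ($M'=\sum_{j<i}n_j$) turns the problem into $\prod_{q=1}^{M'}(H_i+t_{q-1})>\prod_{q=1}^{M'}(H_i-g_i+t_q)$; now each increment $t_q-t_{q-1}$ is a weight $g_j$ with $j<i$, so $g_j<g_i$ reverses every comparison and gives the bracket $>1$, i.e.\ the strict positivity \eqref{eq-l5}. The monotonicity of $\mathbf{g}$ enters precisely at this sign comparison and nowhere else, which is exactly why the roles of ``$<$'' and ``$>$'' are interchanged between the forward and backward orders.
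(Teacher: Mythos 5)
Your proof is correct and follows essentially the same route as the paper's: both arguments reduce each $\mathcal{J}_{i,\mathbf{n}}$ to a comparison of the forward (resp.\ backward) partial-sum products for $\mathbf{n}$ and its neighbour, cancel the blocks that coincide, and compare the remaining factors one by one, invoking $g_j>g_i$ for $j>i$ (resp.\ $g_j<g_i$ for $j<i$) at exactly the same point. The differences are purely presentational: the paper works with the shifted pair $(\mathbf{n}+\mathbf{1}_i,\mathbf{n})$ and a difference of products, where you use the pair $(\mathbf{n},\mathbf{n}-\mathbf{1}_i)$, a factored bracket, and the reindexing by the partial sums $s_p$, $t_q$.
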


\begin{proof} For better readability, we provide the proof of this lemma
for shifted indices by replacing
$\mathcal{J}_{i,\mathbf{n}}(P_{\mathbf{n}}^{(1)},P_{\mathbf{n}-\mathbf{1}_i}^{(1)})$
with
$\mathcal{J}_{i,\mathbf{n}+\mathbf{1}_i}(P_{\mathbf{n}+\mathbf{1}_i}^{(1)},
P_{\mathbf{n}}^{(1)})$ ($i=1,2,\ldots,I$). For instance, instead of
\eqref{eq-l2} we prove
$\mathcal{J}_{I,\mathbf{n}+\mathbf{1}_I}(P_{\mathbf{n}+\mathbf{1}_I}^{(1)},
P_{\mathbf{n}}^{(1)})=0.$

Relations \eqref{eq-l2} and \eqref{eq-l4} follow by the direct
substitution, since for the first $|\mathbf{n}|$ partial sums we
have $S^{(1)}_{\mathbf{n}+\mathbf{1}_I,l}=S^{(1)}_{\mathbf{n},l}$
and, respectively, $S^{(2)}_{\mathbf{n}+\mathbf{1}_1,l}
=S_{\mathbf{n},l}^{(2)}$ ($l=1,2,\ldots,|\mathbf{n}|$), and hence,
$$
\frac{1}{\big<\mathbf{n}+\mathbf{1}_I,\mathbf{g}\big>}\prod_{l=1}^{|\mathbf{n}|+1}
S^{(1)}_{\mathbf{n}+\mathbf{1}_I,l}=\prod_{l=1}^{|\mathbf{n}|}S_{\mathbf{n},l}^{(1)},
$$
and
$$
\frac{1}{\big<\mathbf{n}+\mathbf{1}_1,\mathbf{g}\big>}\prod_{l=1}^{|\mathbf{n}|+1}
S^{(2)}_{\mathbf{n}+\mathbf{1}_1,l}=\prod_{l=1}^{|\mathbf{n}|}S_{\mathbf{n},l}^{(2)}.
$$

To prove the strong inequality of \eqref{eq-l3} note, that in the
relation
\begin{equation*}\label{eq-e4}
P^{(1)}_{\mathbf{n}+\mathbf{1}_i}=C^{(1)}\frac{\prod_{l=1}^{|\mathbf{n}|+1}
S_{\mathbf{n}+\mathbf{1}_i,l}^{(1)}}{n_1!n_2!\cdot\ldots\cdot
n_{i-1}!(n_i+1)!n_{i+1}! \cdot\ldots\cdot
n_I!}\left(\frac{\rho_i}{g_i}\right)\prod_{l=1}^I\left(\frac{\rho_l}{g_l}\right)^{n_l},
\end{equation*}
the product term
$\prod_{l=1}^{|\mathbf{n}|+1}S_{\mathbf{n}+\mathbf{1}_i,l}^{(1)}$
contains the following $|\mathbf{n}|+1$ terms:
\begin{equation}\label{eq-e5}
\begin{aligned}
&S_{\mathbf{n}+\mathbf{1}_i,1}^{(1)}=g_1,S_{\mathbf{n}+\mathbf{1}_i,2}^{(1)}=2g_1,
\ldots, S_{\mathbf{n}+\mathbf{1}_i,n_1}^{(1)}=n_1g_1,
S_{\mathbf{n}+\mathbf{1}_i,n_1+1}=
n_1g_1+g_2,\ldots,\\
&S_{\mathbf{n}+\mathbf{1}_i,|\mathbf{n}|_i}^{(1)}=n_1g_1+\ldots+n_ig_i,\\
&S_{\mathbf{n}+\mathbf{1}_i,|\mathbf{n}|_i+1}^{(1)}=n_1g_1+\ldots+(n_i+1)g_i,\\
&S_{\mathbf{n}+\mathbf{1}_i,|\mathbf{n}|_i+2}^{(1)}=n_1g_1+\ldots+(n_i+1)g_i+g_{i+1},\\
&\ldots,\\
&S_{\mathbf{n}+\mathbf{1}_i,|\mathbf{n}|}^{(1)}
=\big<\mathbf{n}+\mathbf{1}_i,\mathbf{g}\big> \ \text{minus the last
element in sequence}
 \ \eqref{eq-g1},\\
&S_{\mathbf{n}+\mathbf{1}_i,|\mathbf{n}|+1}^{(1)}=
\big<\mathbf{n}+\mathbf{1}_i,\mathbf{g}\big>,
\end{aligned}
\end{equation}
and after dividing the term
$\prod_{l=1}^{|\mathbf{n}|+1}S_{\mathbf{n}+\mathbf{1}_i,l}^{(1)}$ by
$\big<\mathbf{n}+\mathbf{1}_i,\mathbf{g}\big>$, the last term in
\eqref{eq-e5} disappears.

Let us compare the product terms in \eqref{eq-d6} and the first
$|\mathbf{n}|$ terms in \eqref{eq-e5}. The first
$n_1+n_2+\ldots+n_i$ product terms in \eqref{eq-d6} and
\eqref{eq-e5} coincide. However, for all of the following terms we
have $S_{\mathbf{n},l}^{(1)}>S_{\mathbf{n}+\mathbf{1}_i,l}^{(1)}$,
$l=n_1+n_2+\ldots+n_i+1,\ldots,|\mathbf{n}|$. For example,
$$
\begin{aligned}
S_{\mathbf{n},n_1+n_2+\ldots+n_i+1}^{(1)}&=n_1g_1+\ldots+n_ig_i+g_{i+1}\\
&>n_1g_1+\ldots+n_ig_i+g_i\\
&=S_{\mathbf{n}+\mathbf{1}_i,n_1+n_2+\ldots+n_i+1}^{(1)},
\end{aligned}
$$
since by the assumption of the theorem $g_{i+1}>g_i$. Henceforth,
after algebraic reductions we obtain \eqref{eq-l3}. The proof of the
strong inequality of \eqref{eq-l5} is similar. Lemma \ref{lem-2} is
proved.
\end{proof}

\begin{lem}\label{lem-3} Let $\gamma_1$, $\gamma_2$, \ldots,
$\gamma_I$ be positive numbers ($\sum_{i=1}^I \gamma_i=1$), let
$\theta_i^{(1)}$ and $\theta_i^{(2)}$ be the values that are defined
by \eqref{eq-d11} and \eqref{eq-d12}, let $\Delta_i^{(1)}$ and
$\Delta_i^{(2)}$ be the values that are defined by \eqref{eq-d14}
and \eqref{eq-d15}, and let Condition \eqref{eq-l7} be satisfied.

Then the limiting, as $N\to\infty$, stationary probabilities
$P_{\lfloor N{\bf\Gamma}\rfloor}^{(2)}$ are well-defined, and
\begin{eqnarray}
\lim_{N\to\infty}\frac{P_{\lfloor
N{\bf\Gamma}\rfloor+\mathbf{1}_i}^{(1)}} {P_{\lfloor
N{\bf\Gamma}\rfloor}^{(1)}}&=&\Delta_i^{(1)},\label{eq-e20}\\
\lim_{N\to\infty}\frac{P_{\lfloor
N{\bf\Gamma}\rfloor+\mathbf{1}_i}^{(2)}} {P_{\lfloor
N{\bf\Gamma}\rfloor}^{(2)}}&=&\Delta_i^{(2)}.\label{eq-e21}
\end{eqnarray}
\end{lem}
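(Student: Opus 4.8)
The plan is to prove both limits by computing the one‑step ratio $P^{(\cdot)}_{\mathbf{n}+\mathbf{1}_i}/P^{(\cdot)}_{\mathbf{n}}$ in closed form from the definitions \eqref{eq-d8}--\eqref{eq-d9} and then letting $\mathbf{n}=\lfloor N{\bf\Gamma}\rfloor\to\infty$ through a Riemann‑sum argument. For the forward mass function I would reuse the bookkeeping of partial sums already recorded in the proof of Lemma~\ref{lem-2} (display \eqref{eq-e5}): incrementing $n_i$ by one leaves the first $|\mathbf{n}|_i$ partial sums unchanged, inserts exactly one new partial sum equal to $\sum_{k\le i}n_kg_k+g_i$, and raises each of the remaining $|\mathbf{n}|-|\mathbf{n}|_i$ partial sums (those belonging to blocks $j>i$ in the forward order \eqref{eq-g1}) by precisely $g_i$. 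Combining this with the change $n_i!\to(n_i+1)!$ in the denominator and the extra geometric factor $\rho_i/g_i$, the ratio factors into three pieces that can be analyzed separately.

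The evaluation then proceeds as follows. With $n_k=\lfloor N\gamma_k\rfloor$, the leading piece $\frac{\rho_i}{g_i(n_i+1)}\big(\sum_{k\le i}n_kg_k+g_i\big)$ converges to $\frac{\rho_i\sum_{k\le i}\gamma_kg_k}{g_i\gamma_i}$. For the product over the shifted blocks, $\prod_{j>i}\prod_{r=1}^{n_j}\frac{A_j+rg_j+g_i}{A_j+rg_j}$ with $A_j=\sum_{k<j}n_kg_k$, I would take logarithms; since each summand is $O(1/N)$ (here $A_j=\Theta(N)$ uses $\gamma_k>0$), replacing $\ln(1+x)$ by $x$ and reading $\frac1N\sum_r$ as a Riemann sum yields $\sum_{j>i}\frac{g_i}{g_j}\ln\big(1+\frac{\gamma_jg_j}{\sum_{k<j}\gamma_kg_k}\big)$ in the limit. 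Splitting $\frac{g_i}{g_j}=\big(\frac{g_i}{g_j}-1\big)+1$ separates this into $\ln\theta_i^{(1)}$ (matching \eqref{eq-d11}) plus a telescoping sum equal to $\ln\frac{\sum_l\gamma_lg_l}{\sum_{k\le i}\gamma_kg_k}$; multiplying by the leading piece cancels the partial sum $\sum_{k\le i}\gamma_kg_k$ and reproduces exactly $\Delta_i^{(1)}$ from \eqref{eq-d14}, giving \eqref{eq-e20}. The backward case \eqref{eq-e21} is completely symmetric: incrementing $n_i$ now shifts the blocks $j<i$, the base sums become $B_j=\sum_{k>j}n_kg_k$, and the identical computation produces $\theta_i^{(2)}$ and hence $\Delta_i^{(2)}$.

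The remaining, and genuinely harder, point is the well‑definedness of $P^{(2)}$, i.e. convergence of the series \eqref{eq-d13} (for $P^{(1)}$ this is free, by the comparison $C^{(1)}\ge 1-\rho$ already noted before the lemma). Writing the general term block by block gives the compact form $a_{\mathbf{n}}=\prod_{j}\rho_j^{\,n_j}\binom{B_j/g_j+n_j}{n_j}$, and because the weights are increasing the coupling factors $B_j/g_j$ can inflate $a_{\mathbf{n}}$, so summability is truly conditional. The plan is to run the ratio computation above uniformly and invoke Condition \eqref{eq-l7}: since each $\Delta_i^{(2)}<1$ we have $\sum_i\gamma_i\ln\Delta_i^{(2)}<0$, which forces exponential decay of $a_{\lfloor N{\bf\Gamma}\rfloor}$ in $N$ along every ray.

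I expect the \emph{main obstacle} to be upgrading this directional decay to convergence of the full multidimensional series. A single‑direction ratio bound does not by itself bound $\sum_{\mathbf{n}}a_{\mathbf{n}}$; one must control the terms uniformly over the compact simplex of directions $(\gamma_1,\dots,\gamma_I)$ and, in particular, handle the boundary where some $\gamma_i\to0$, where the estimate $A_j,B_j=\Theta(N)$ used above degenerates and the clean Riemann‑sum asymptotics no longer apply verbatim. Establishing this uniform/boundary bound, rather than the ratio limits themselves, is where I anticipate the real effort, and it is the step on which the properness of $P^{(2)}$ ultimately rests.
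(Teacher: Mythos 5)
Your computation of the two ratio limits is correct and is essentially the paper's own argument, differing only in bookkeeping: the paper factors out $\big<\mathbf{n}+\mathbf{1}_i,\mathbf{g}\big>$ and compares partial sums index by index, so that each block-$j$ factor carries a deficit $g_j-g_i$ in the denominator (display \eqref{eq-h3.1}), whereas you factor out the inserted partial sum $\sum_{k\le i}n_kg_k+g_i$ and let every later partial sum gain $g_i$. After the Riemann-sum passage the two bookkeepings produce the same limit: your split $\frac{g_i}{g_j}=\big(\frac{g_i}{g_j}-1\big)+1$ with the telescoping sum is precisely the identity reconciling your leading factor $\rho_i\sum_{k\le i}\gamma_kg_k/(\gamma_ig_i)$ with the paper's $\rho_i\sum_{l=1}^I\gamma_lg_l/(\gamma_ig_i)$, so both recover $\Delta_i^{(1)}$ of \eqref{eq-d14}, and symmetrically $\Delta_i^{(2)}$; hence \eqref{eq-e20} and \eqref{eq-e21} are established.

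Concerning the point you flag as the main obstacle: you have imposed on yourself a stronger goal than the lemma, as the paper reads it, actually asserts. The paper never proves convergence of the full multidimensional series \eqref{eq-d13}; its proof states explicitly that the series converges \emph{``if the infinite sum is taken on the set of indices specified by the vectors of $\mathcal{N}_{\bf\Gamma}$''}, i.e.\ only along the fixed ray, and this follows at once from the ratio limits being $\Delta_i^{(2)}<1$ under Condition \eqref{eq-l7} --- exactly the directional-decay observation you already make. Properness of $P^{(2)}$ over all $\mathbf{n}\ge\mathbf{0}$ is nowhere established in the paper; it is a standing \emph{assumption} in Lemmas \ref{lem-2} and \ref{lem-5}. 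Moreover, since the lemma fixes a single direction ${\bf\Gamma}$ with all $\gamma_i>0$, no uniformity over the simplex or control near $\gamma_i\to 0$ is needed. So the uniform/boundary bound you anticipate as ``the real effort'' is not a gap in your proof relative to the paper's claim: your argument already delivers everything the paper's proof delivers (and your remark is, if anything, a fair criticism of the paper itself, whose strong-sense properness of $P^{(2)}$ remains unproven).
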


\begin{proof}
Indeed,  for
$\frac{P_{\mathbf{n}+\mathbf{1}_i}^{(1)}}{P_{\mathbf{n}}^{(1)}}$ we
have as follows:
\begin{equation}\label{eq-h1}
\frac{P_{\mathbf{n}+\mathbf{1}_i}^{(1)}}{P_{\mathbf{n}}^{(1)}}=
\frac{\rho_i}{(n_i+1)g_i}\cdot
\frac{\prod_{l=1}^{|\mathbf{n}|+1}S_{\mathbf{n}+\mathbf{1}_i,l}^{(1)}}
{\prod_{l=1}^{|\mathbf{n}|}S_{\mathbf{n},l}^{(1)}}=
\frac{\rho_i\big<\mathbf{n}+\mathbf{1}_i,\mathbf{g}\big>}{(n_i+1)g_i}\cdot
\prod_{l=1}^{|\mathbf{n}|}\frac{S_{\mathbf{n}+\mathbf{1}_i,l}^{(1)}}
{S_{\mathbf{n},l}^{(1)}}.
\end{equation}
Assuming that $N$ tends to infinity in \eqref{eq-h1}, then for
$i=1,2,\ldots, I$ we have the expansion
\begin{equation}\label{eq-h2}
\begin{aligned}
\frac{\rho_i\big<\lfloor
N{\bf\Gamma}\rfloor+\mathbf{1}_i,\mathbf{g}\big>}{(\lfloor
N\gamma_i\rfloor+1)g_i}&= \frac{\rho_i(\lfloor N\gamma_1\rfloor
g_1+\lfloor N\gamma_2\rfloor g_2+\ldots+\lfloor N\gamma_I\rfloor
g_I)+\rho_ig_i}{\lfloor
N\gamma_i\rfloor g_i+g_i}\\
&=\frac{\rho_i(\gamma_1g_1+\gamma_2g_2+\ldots+\gamma_Ig_I)}{\gamma_ig_i}
\left[1+O\left(\frac{1}{N}\right)\right].
\end{aligned}
\end{equation}
Next, with the aid of \eqref{eq-d6} and \eqref{eq-e5} we prove
\begin{equation}\label{eq-h3}
\begin{aligned}
&\lim_{N\to\infty}\prod_{l=1}^{|\lfloor
N{\bf\Gamma}\rfloor|}\frac{S_{\lfloor
N{\bf\Gamma}\rfloor+\mathbf{1}_i,l}^{(1)}} {S_{\lfloor
N{\bf\Gamma}\rfloor,l}^{(1)}}=\theta_i^{(1)}.
\end{aligned}
\end{equation}

Indeed, for $l=1,2,\ldots,\lfloor N\gamma_1\rfloor+\lfloor
N\gamma_2\rfloor+\ldots+\lfloor N\gamma_i\rfloor$, we have
$S_{\lfloor N{\bf\Gamma}\rfloor+\mathbf{1}_i,l}^{(1)}=S_{\lfloor
N{\bf\Gamma}\rfloor,l}^{(1)}$, and hence
\begin{equation*}
\lim_{N\to\infty}\prod_{l=1}^{\lfloor
N\gamma_{1}\rfloor+\ldots+\lfloor
N\gamma_{i}\rfloor}\frac{S_{\lfloor
N{\bf\Gamma}\rfloor+\mathbf{1}_i,l}^{(1)}}{S_{\lfloor
N{\bf\Gamma}\rfloor,l}^{(1)}}=1.
\end{equation*}
For further simplifications, we use the conventional notation
$$
\big|\lfloor N{\bf\Gamma}\rfloor\big|_i=\lfloor
N\gamma_1\rfloor+\lfloor N\gamma_2\rfloor+\ldots+\lfloor
N\gamma_i\rfloor.
$$
Let us first find $\lim_{N\to\infty}\prod_{l=|\lfloor
N{\bf\Gamma}\rfloor|_i+1}^{|\lfloor N{\bf\Gamma}\rfloor|_{i+1}}
\frac{S_{\lfloor
N{\bf\Gamma}\rfloor+\mathbf{1}_i,l}^{(1)}}{S_{\lfloor
N{\bf\Gamma}\rfloor,l}^{(1)}}$.

 Notice, that
for any $1\leq m\leq\lfloor N\gamma_{k+1}\rfloor$, we have
\begin{equation}\label{eq-h3.1}
\begin{aligned}
&\frac{S_{\lfloor N{\bf\Gamma}\rfloor+\mathbf{1}_i,|\lfloor
N{\bf\Gamma}\rfloor|_i+m}^{(1)}} {S_{\lfloor
N{\bf\Gamma}\rfloor,|\lfloor
N{\bf\Gamma}\rfloor|_i+m}^{(1)}}\\
&=\frac{g_1\lfloor N\gamma_1\rfloor+g_2\lfloor
N\gamma_2\rfloor+\ldots+g_i\lfloor
N\gamma_i\rfloor+g_i+(m-1)g_{i+1}}{g_1\lfloor
N\gamma_1\rfloor+g_2\lfloor N\gamma_2\rfloor+\ldots+g_i\lfloor
N\gamma_i\rfloor+g_i+(m-1)g_{i+1}+(g_{i+1}-g_i)}\\
&=1-\frac{g_{i+1}-g_i}{g_1\lfloor N\gamma_1\rfloor+g_2\lfloor
N\gamma_2\rfloor+\ldots+g_i\lfloor
N\gamma_i\rfloor+(m-1)g_{i+1}+g_i}.
\end{aligned}
\end{equation}
Hence,
\begin{equation}\label{eq-h5}
\begin{aligned}
\lim_{N\to\infty}\prod_{l=|\lfloor
N{\bf\Gamma}\rfloor|_i+1}^{|\lfloor N{\bf\Gamma}\rfloor|_{i+1}}
\frac{S_{\lfloor
N{\bf\Gamma}\rfloor+\mathbf{1}_i,l}^{(1)}}{S_{\lfloor
N{\bf\Gamma}\rfloor,l}^{(1)}}
&=\exp\left(-\int_0^1\frac{(g_{i+1}-g_i)\gamma_{i+1}}
{\gamma_1g_1+\gamma_2g_2+\ldots+\gamma_{i}g_i+\gamma_{i+1}g_{i+1}x}\mathrm{d}x\right)\\
&=\exp\left[\left(\frac{g_i}{g_{i+1}}-1\right)
\ln\left(1+\frac{\gamma_{i+1}g_{i+1}}{\sum_{k=1}^i\gamma_kg_k}\right)\right].
\end{aligned}
\end{equation}

Similarly, for any $j=0,1,\ldots, I-i$ we have
\begin{equation}\label{eq-h6}
\begin{aligned}
&\lim_{N\to\infty}\prod_{l=|\lfloor
N{\bf\Gamma}\rfloor|_{i+j}+1}^{|\lfloor
N{\bf\Gamma}\rfloor|_{i+j+1}} \frac{S_{\lfloor
N{\bf\Gamma}\rfloor+\mathbf{1}_i,l}^{(1)}} {S_{\lfloor
N{\bf\Gamma}\rfloor,l}^{(1)}}\\
&=\exp\left(-\int_0^1\frac{(g_{i+1+j}-g_i)\gamma_{i+1+j}}
{\gamma_1g_1+\gamma_2g_2+\ldots+\gamma_{i+j}g_{i+j}+\gamma_{i+j+1}g_{i+j+1}x}
\mathrm{d}x\right)\\
&=\exp\left[\left(\frac{g_i}{g_{i+1+j}}-1\right)
\ln\left(1+\frac{\gamma_{i+1+j}g_{i+1+j}}{\sum_{k=1}^{i+j}\gamma_kg_k}\right)\right].
\end{aligned}
\end{equation}
So, \eqref{eq-h3} follows.

Again, we have \eqref{eq-h2}, and similarly to \eqref{eq-h3} we
obtain
\begin{equation}\label{eq-h33}
\lim_{N\to\infty}\prod_{l=1}^{|\lfloor
N{\bf\Gamma}\rfloor|}\frac{S_{\lfloor
N{\bf\Gamma}\rfloor+\mathbf{1}_i,l}^{(2)}} {S_{\lfloor
N{\bf\Gamma}\rfloor,l}^{(2)}} =\theta_i^{(2)}.
\end{equation}
Then Relations \eqref{eq-h2}, \eqref{eq-h33} and Condition
\eqref{eq-l7} make the stationary probabilities
$P_{\mathbf{n}}^{(2)}$ well-defined, since according to these
relations, $\lim_{N\to\infty}\frac{P_{\lfloor
N{\bf\Gamma}\rfloor+\mathbf{1}_i}^{(2)}}{P_{\lfloor
N{\bf\Gamma}\rfloor}^{(2)}}<1$, $i=1,2,\ldots,I$. The last also
means that the series in \eqref{eq-d13} converges, if the infinite
sum is taken on the set of indices specified by the vectors of
$\mathcal{N}_{\bf\Gamma}$. Hence, relations \eqref{eq-e20} and
\eqref{eq-e21} follow. The lemma is proved.
\end{proof}

For the purpose of this paper, we need in stronger results than
those are given by Lemma \ref{lem-3}. The following lemma is an
extension of Lemma \ref{lem-3}.

\begin{lem}\label{lem-3'} Under the assumptions of Lemma
\ref{lem-3}, as $N\to\infty$, for $i=1,2,\ldots, I$ we have:
\begin{equation}\label{eq-h27}
\begin{aligned}
P_{\lfloor N{\bf\Gamma}\rfloor+\mathbf{1}_i}^{(1)}
=&\Delta_i^{(1)}P_{\lfloor
N{\bf\Gamma}\rfloor}^{(1)}\left[1-\frac{\alpha_i^{(1)}}{N}+
o\left(\frac{1}{N}\right)\right],
\end{aligned}
\end{equation}
and
\begin{equation}\label{eq-h30}
\begin{aligned}
P_{\lfloor N{\bf\Gamma}\rfloor+\mathbf{1}_i}^{(2)}
=&\Delta_i^{(2)}P_{\lfloor
N{\bf\Gamma}\rfloor}^{(2)}\left[1-\frac{\alpha_i^{(2)}}{N}+
o\left(\frac{1}{N}\right)\right],
\end{aligned}
\end{equation}
where
\begin{equation}\label{eq-h17}
\begin{aligned}
\alpha_i^{(1)}=&\left(\frac{\sum_{j=1}^I\gamma_jg_j-\gamma_ig_i}
{\gamma_i\sum_{j=1}^I\gamma_jg_j}\right)+\sum_{j=1}^{I-i}\frac{1}{2\gamma_{i+j}}
\left(\frac{g_i}{g_{i+j}}-1\right)^2
\left[\ln\left(1+\frac{\gamma_{i+j}g_{i+j}}{\sum_{k=1}^{i+j}\gamma_kg_k}\right)
\right]^2\\
&\times\exp\left[\left(\frac{g_i}{g_{i+j}}-1\right)
\ln\left(1+\frac{\gamma_{i+1}g_{i+1}}{\sum_{k=1}^{i+j}\gamma_kg_k}\right)\right],
\quad i=1,2,\ldots,I-1,
\end{aligned}
\end{equation}
\begin{equation}\label{eq-h18}
\alpha_I^{(1)}=\left(\frac{\sum_{j=1}^I\gamma_jg_j-\gamma_Ig_I}
{\gamma_I\sum_{j=1}^I\gamma_jg_j}\right),
\end{equation}
\begin{equation}\label{eq-h19}
\alpha_1^{(2)}=\left(\frac{\sum_{j=1}^I\gamma_jg_j-\gamma_1g_1}
{\gamma_1\sum_{j=1}^I\gamma_jg_j}\right),
\end{equation}
and
\begin{equation}\label{eq-h20}
\begin{aligned}
\alpha_i^{(2)}=&\left(\frac{\sum_{j=1}^I\gamma_jg_j-\gamma_ig_i}
{\gamma_i\sum_{j=1}^I\gamma_jg_j}\right)+\sum_{j=1}^{i-1}\frac{1}{2\gamma_j}
\left(\frac{g_i}{g_j}-1\right)^2\left[\ln
\left(1+\frac{\gamma_jg_j}{\sum_{k=j+1}^I\gamma_kg_k}\right)\right]^2\\
&\times\exp\left[\left(\frac{g_i}{g_j}-1\right)\ln\left(1+
\frac{\gamma_jg_j}{\sum_{k=j+1}^I\gamma_kg_k}\right)\right], \quad
i=2,3,\ldots,I.
\end{aligned}
\end{equation}
\end{lem}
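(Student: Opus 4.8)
The plan is to sharpen the proof of Lemma \ref{lem-3} by carrying every asymptotic expansion one order further in $1/N$. The starting point is the same identity \eqref{eq-h1}: a prefactor times a product of ratios of partial sums. Lemma \ref{lem-3} extracts only the limits of these two pieces, whereas here I must also compute their $O(1/N)$ corrections, and the cleanest device for this is to replace the product of partial sums by a ratio of Gamma functions. Grouping the sums \eqref{eq-d6} block by block gives
$$\prod_{l=1}^{|\mathbf{n}|}S^{(1)}_{\mathbf{n},l}=\prod_{i=1}^{I}g_i^{n_i}\,\frac{\Gamma\!\left(\frac{\sum_{k=1}^{i-1}n_kg_k}{g_i}+n_i+1\right)}{\Gamma\!\left(\frac{\sum_{k=1}^{i-1}n_kg_k}{g_i}+1\right)},$$
so that incrementing $n_i$ turns $P^{(1)}_{\mathbf{n}+\mathbf{1}_i}/P^{(1)}_{\mathbf{n}}$ into the prefactor $\rho_i\langle\mathbf{n}+\mathbf{1}_i,\mathbf{g}\rangle/((n_i+1)g_i)$ multiplied by one Gamma ratio for each block $i'>i$, the block $i'$ picking up a shift $g_i/g_{i'}$ in its arguments. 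The backward quantities $P^{(2)}_{\mathbf{n}}$ admit the mirror-image representation.

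First I would expand the prefactor along $\mathbf{n}=\lfloor N{\bf\Gamma}\rfloor$. Dividing numerator and denominator by $N$, exactly as in \eqref{eq-h2} but retaining the next term, gives
$$\frac{\rho_i\langle\lfloor N{\bf\Gamma}\rfloor+\mathbf{1}_i,\mathbf{g}\rangle}{(\lfloor N\gamma_i\rfloor+1)g_i}=\frac{\rho_i\sum_{l=1}^I\gamma_lg_l}{\gamma_ig_i}\left[1-\frac{1}{N}\left(\frac{\sum_{l=1}^I\gamma_lg_l-\gamma_ig_i}{\gamma_i\sum_{l=1}^I\gamma_lg_l}\right)+o\!\left(\frac1N\right)\right],$$
which already produces the first summand of $\alpha^{(1)}_i$ in \eqref{eq-h17}–\eqref{eq-h18}. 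For each block $i'>i$ I would then apply the ratio asymptotics $\Gamma(z+s)/\Gamma(z)=z^{s}\big(1+\tfrac{s(s-1)}{2z}+O(z^{-2})\big)$ to the two Gamma ratios of that block, whose arguments are of order $N$. The leading powers reproduce, after the same telescoping used in Lemma \ref{lem-3}, both the factor $\theta^{(1)}_i$ and the missing factor $\frac{\sum_{l=1}^I\gamma_lg_l}{\sum_{k=1}^{i}\gamma_kg_k}$ that upgrades the prefactor limit to $\Delta^{(1)}_i$, while the subleading $s(s-1)/(2z)$ terms supply the block-by-block $O(1/N)$ corrections. Passing to logarithms, adding the prefactor and block contributions, and exponentiating via $1+x=e^{x}+O(x^{2})$ yields the multiplicative form \eqref{eq-h27}, with $\alpha^{(1)}_i$ equal to minus the total $1/N$ coefficient of the logarithm; reversing the order gives \eqref{eq-h30} and \eqref{eq-h19}–\eqref{eq-h20} identically.

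The hard part will be the honest bookkeeping of every $O(1/N)$ piece and checking that they collapse into the stated closed forms. Three sources must be tracked at once: the $s(s-1)/(2z)$ Stirling corrections of each Gamma ratio, the $O(1)$ shifts hidden in the exact arguments (the ``$+1$''s and the $g_i/g_{i'}$ shifts, which sit inside arguments of size $N$ and therefore feed the $1/N$ term), and---most delicately---the fractional parts $\{N\gamma_k\}$ generated by the floor functions. The last point is a genuine obstacle: since $\lfloor N\gamma_k\rfloor=N\gamma_k+O(1)$ with an $O(1)$ term that does not converge for irrational directions, these fractional parts enter precisely at order $1/N$ and would make the $1/N$ coefficient oscillate with $N$. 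The expansions \eqref{eq-h27} and \eqref{eq-h30} with fixed coefficients must therefore be read in the continuous approximation $\lfloor N\gamma_k\rfloor\mapsto N\gamma_k$ already implicit in the limits of Lemma \ref{lem-3}; under that reading the arguments become smooth functions of the real variables $n_k=N\gamma_k$ and the coefficients stabilize. Verifying that the surviving terms then reassemble exactly into \eqref{eq-h17}–\eqref{eq-h20}, in particular matching the logarithmic factors inherited from $\theta^{(1)}_i$ and $\theta^{(2)}_i$, is the single laborious step, and is where I expect essentially all of the effort to lie.
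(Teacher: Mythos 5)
Your plan differs from the paper's proof at exactly one point, and it is the right point to differ at: the paper expands each block of the paired partial-sum ratios by invoking the mean value theorem to write the block as a power $\left(1-x_N\right)^{\lfloor N\gamma_{i+1}\rfloor}$ (see \eqref{eq-h35}--\eqref{eq-h36}) and then expanding that power, whereas you keep each block exact as a ratio of Gamma functions and apply $\Gamma(z+s)/\Gamma(z)=z^{s}\bigl(1+\tfrac{s(s-1)}{2z}+O(z^{-2})\bigr)$. Your Gamma representation is correct, the telescoping that upgrades $\theta_i^{(1)}$ to $\Delta_i^{(1)}$ is correct, your prefactor expansion agrees with \eqref{eq-h34} (it is the first summand of each $\alpha$), and your caveat about the fractional parts $\{N\gamma_k\}$ is legitimate --- the paper makes the same continuous approximation silently.

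However, the step you defer as ``laborious'' is not laborious but impossible: carried out, your expansion does \emph{not} reassemble into \eqref{eq-h17} and \eqref{eq-h20}. For a single block $i'>i$, writing $s=g_i/g_{i'}$ and $\sigma=\sum_{k=1}^{i'-1}\gamma_kg_k$, your Gamma asymptotics give, in the continuous approximation,
\begin{equation*}
\prod_{m=1}^{\lfloor N\gamma_{i'}\rfloor}\frac{N\sigma+g_i+(m-1)g_{i'}}{N\sigma+mg_{i'}}
=\left(1+\frac{\gamma_{i'}g_{i'}}{\sigma}\right)^{s-1}
\left[1+\frac{1}{N}\cdot\frac{g_i\,(g_{i'}-g_i)\,\gamma_{i'}}{2\,\sigma\,(\sigma+\gamma_{i'}g_{i'})}+o\!\left(\frac{1}{N}\right)\right],
\end{equation*}
a correction that is rational in the data and \emph{positive} (since $0<s<1$), hence contributing a negative amount to $\alpha_i^{(1)}$; the second summands of \eqref{eq-h17} and \eqref{eq-h20} are instead positive squared-logarithm expressions. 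These are different functions of $(\gamma_1,\ldots,\gamma_I,\mathbf{g})$, so no bookkeeping can reconcile them. A two-class check: $g_1=1$, $g_2=2$, $\gamma_1=\gamma_2=\tfrac12$ gives $\prod_{m=1}^{N/2}\frac{N/2+2m-1}{N/2+2m}=3^{-1/2}\bigl(1+\tfrac{1}{3N}+o(1/N)\bigr)$, easily confirmed numerically, while \eqref{eq-h17} asserts a correction of the opposite sign. The discrepancy originates in the paper's own proof, not in your method: between \eqref{eq-h35} and \eqref{eq-h41} the mean-value point $\eta_N$ is treated as if $\eta_N-\eta=o(1/N)$, but its drift is itself of order $1/N$ and enters at exactly the order being computed (the degenerate case $g_i\to0$, where the block telescopes exactly to $\sigma/(\sigma+\gamma_{i'}g_{i'})$ with zero correction while the paper's coefficient is nonzero, exposes this at a glance). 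So your approach, honestly completed, establishes expansions of the form \eqref{eq-h27}--\eqref{eq-h30}, leaves \eqref{eq-h18}--\eqref{eq-h19} intact (they involve only the prefactor), but produces different second summands in \eqref{eq-h17} and \eqref{eq-h20}; it refutes, rather than proves, the lemma as stated, and your closing claim that the surviving terms match the stated constants is the gap.
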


\begin{proof} We are to establish the exact values of constants
$c_i$, $c_{\theta_i}^{(1)}$, and $c_{\theta_i}^{(2)}$
($i=1,2,\ldots,I$) in the expansions
$$
\frac{\rho_i\big<\lfloor
N{\bf\Gamma}\rfloor+\mathbf{1}_i,\mathbf{g}\big>}{(\lfloor
N\gamma_i\rfloor+1)g_i}=
\frac{\rho_i\sum_{j=1}^I\gamma_jg_j}{\gamma_ig_i}\left[1+\frac{c_i}{N}+
O\left(\frac{1}{N^2}\right)\right],
$$
$$
\prod_{l=1}^{N}\frac{S^{(1)}_{\lfloor
N{\bf\Gamma}\rfloor+\mathbf{1}_i,l}} {S^{(1)}_{\lfloor
N{\bf\Gamma}\rfloor,l}}=\theta_i^{(1)}\left[1+\frac{c_{\theta_i}^{(1)}}{N}
+o\left(\frac{1}{N}\right)\right]
$$
and
$$
\prod_{l=1}^{N}\frac{S^{(2)}_{\lfloor
N{\bf\Gamma}\rfloor+\mathbf{1}_i,l}} {S^{(2)}_{\lfloor
N{\bf\Gamma}\rfloor,l}}=\theta_i^{(2)}\left[1+\frac{c_{\theta_i}^{(2)}}{N}
+o\left(\frac{1}{N}\right)\right]
$$
for large $N$. Then, we will arrive at necessary expansions
$$
P_{\lfloor
N{\bf\Gamma}\rfloor+\mathbf{1}_i}^{(1)}=\Delta_{i}^{(1)}P_{\lfloor
N{\bf\Gamma}\rfloor}^{(1)} \left[1+\frac{c_i}{N}
+\frac{c_{\theta_i}^{(1)}}{N}+o\left(\frac{1}{N}\right)\right]
$$
and
$$
P_{\lfloor
N{\bf\Gamma}\rfloor+\mathbf{1}_i}^{(2)}=\Delta_{i}^{(2)}P_{\lfloor
N{\bf\Gamma}\rfloor}^{(2)} \left[1+\frac{c_i}{N}
+\frac{c_{\theta_i}^{(2)}}{N}+o\left(\frac{1}{N}\right)\right].
$$
From \eqref{eq-h2} one can obtain a more precise expansion than that
is given by the right-hand side of \eqref{eq-h2}. Namely, after some
algebra,
\begin{equation}\label{eq-h34}
\begin{aligned}
\frac{\rho_i\big<\lfloor
N{\bf\Gamma}\rfloor+\mathbf{1}_i,\mathbf{g}\big>}{(\lfloor
N\gamma_i\rfloor+1)g_i}&=
\frac{\rho_i\sum_{j=1}^I\gamma_jg_j}{\gamma_ig_i}
\left[1+\frac{1}{N}\left(\frac{\gamma_ig_i-\sum_{j=1}^I\gamma_jg_j}
{\gamma_i\sum_{j=1}^I\gamma_jg_j}\right)+o\left(\frac{1}{N}\right)\right].
\end{aligned}
\end{equation}
So, the constant $c_i$ is found, and from this estimate we
immediately arrive at the estimates \eqref{eq-h27} and
\eqref{eq-h30} for $i=I$ (containing the constant $\alpha_I^{(1)}$)
and $i=1$ (containing the constant $\alpha_1^{(2)}$), respectively.

Find now the constants $c_{\theta_i}^{(1)}$, $i=1,2,\ldots,I-1$, and
thus prove the estimate \eqref{eq-h27} for $i=1,2,\ldots,I-1$. From
\eqref{eq-h3.1}, for large $N$ using the mean value theorem, for
some value $\eta_N\in(0,1)$ we obtain
\begin{equation}\label{eq-h35}
\begin{aligned}
\prod_{l=|\lfloor N{\bf\Gamma}\rfloor|_i+1}^{|\lfloor
N{\bf\Gamma}\rfloor|_{i+1}}\frac{S_{\lfloor
N{\bf\Gamma}\rfloor+\mathbf{1}_i,l}^{(1)}} {S_{\lfloor
N{\bf\Gamma}\rfloor,l}^{(1)}}&=\left(1-\frac{g_{i+1}-g_i}
{\sum_{j=1}^ig_j\lfloor N\gamma_j\rfloor+\eta_N g_{i+1}\lfloor
N\gamma_{i+1}\rfloor}\right)^{\lfloor N\gamma_{i+1}\rfloor}.
\end{aligned}
\end{equation}
Then, the integral given by \eqref{eq-h6} can be written in the form
\begin{equation}\label{eq-h36}
\lim_{N\to\infty}\prod_{l=|\lfloor
N{\bf\Gamma}\rfloor|_i+1}^{|\lfloor N{\bf\Gamma}\rfloor|_{i+1}}
\frac{S_{\lfloor N{\bf\Gamma}\rfloor+\mathbf{1}_i,l}^{(1)}}
{S_{\lfloor
N{\bf\Gamma}\rfloor,l}^{(1)}}=\exp\left(-\frac{(g_{i+1}-g_i)\gamma_{i+1}}
{\sum_{j=1}^ig_j\gamma_j+\eta g_{i+1}\gamma_{j+1}}\right),
\end{equation}
where $\eta=\lim_{N\to\infty}\eta_N$. Let us find the limit
\begin{equation}\label{eq-h37}
\lim_{N\to\infty}N\left[\prod_{l=|\lfloor
N{\bf\Gamma}\rfloor|_i+1}^{|\lfloor
N{\bf\Gamma}\rfloor|_{i+1}}\frac{S_{\lfloor
N{\bf\Gamma}\rfloor+\mathbf{1}_i,l}^{(1)}} {S_{\lfloor
N{\bf\Gamma}\rfloor,l}^{(1)}}-\exp\left(-\frac{(g_{i+1}-g_i)\gamma_{i+1}}
{\sum_{j=1}^ig_j\gamma_j+\eta g_{i+1}\gamma_{j+1}}\right)\right].
\end{equation}
Expanding the right-hand side of \eqref{eq-h35} we obtain
\begin{equation}
\begin{aligned}
&\left(1-\frac{g_{i+1}-g_i} {\sum_{j=1}^ig_j\lfloor
N\gamma_j\rfloor+\eta_N g_{i+1}\lfloor
N\gamma_{i+1}\rfloor}\right)^{\lfloor N\gamma_{i+1}\rfloor}
=\exp\left(-\frac{(g_{i+1}-g_i)\gamma_{i+1}}
{\sum_{j=1}^ig_j\gamma_j+\eta
g_{i+1}\gamma_{j+1}}\right)\\
&\times\left[1-\frac{\gamma_{j+1}}{2N}\left(\frac{g_{i+1}-g_i}
{\sum_{j=1}^i g_j\gamma_j+\eta
g_{i+1}\gamma_{j+1}}\right)^2+o\left(\frac{1}{N}\right)\right].
\end{aligned}
\end{equation}
Hence, the limit in \eqref{eq-h37} is
\begin{equation}\label{eq-h38}
-\frac{1}{2\gamma_{i+1}}\left(\frac{(g_{i+1}-g_i)\gamma_{i+1}}
{\sum_{j=1}^i g_j\gamma_j+\eta
g_{i+1}\gamma_{j+1}}\right)^2\exp\left(-\frac{(g_{i+1}-g_i)\gamma_{i+1}}
{\sum_{j=1}^ig_j\gamma_j+\eta g_{i+1}\gamma_{j+1}}\right).
\end{equation}
On the other hand, from \eqref{eq-h5} and \eqref{eq-h36} we find
\begin{equation}\label{eq-h39}
-\frac{(g_{i+1}-g_i)\gamma_{i+1}} {\sum_{j=1}^ig_j\gamma_j+\eta
g_{i+1}\gamma_{j+1}}=\left(\frac{g_i}{g_{i+1}}-1\right)
\ln\left(1+\frac{\gamma_{i+1}g_{i+1}}{\sum_{k=1}^i\gamma_kg_k}\right).
\end{equation}
Hence, it follows from \eqref{eq-h38} and \eqref{eq-h39} that the
limit in \eqref{eq-h37} is
\begin{equation*}
\begin{aligned}
&-\frac{1}{2\gamma_{i+1}}\left(\frac{g_i}{g_{i+1}}-1\right)^2
\left[\ln\left(1+\frac{\gamma_{i+1}g_{i+1}}{\sum_{k=1}^i\gamma_kg_k}\right)\right]^2\\
&\times\exp\left[\left(\frac{g_i}{g_{i+1}}-1\right)
\ln\left(1+\frac{\gamma_{i+1}g_{i+1}}{\sum_{k=1}^i\gamma_kg_k}\right)\right].
\end{aligned}
\end{equation*}
Similarly, for $j=0,1,\ldots, I-i$, we obtain the limit
\begin{equation}
\begin{aligned}
\label{eq-h40} &\lim_{N\to\infty}N\left\{\prod_{l=|\lfloor
N{\bf\Gamma}\rfloor|_{i+j}+1}^{|\lfloor
N{\bf\Gamma}\rfloor|_{i+j+1}} \frac{S_{\lfloor
N{\bf\Gamma}\rfloor+\mathbf{1}_i,l}^{(1)}} {S_{\lfloor
N{\bf\Gamma}\rfloor,l}^{(1)}}-\exp\left[\left(\frac{g_i}{g_{i+1+j}}-1\right)
\ln\left(1+\frac{\gamma_{i+1+j}g_{i+1+j}}{\sum_{k=1}^{i+j}\gamma_kg_k}\right)\right]
\right\}\\
=&-\frac{1}{2\gamma_{i+1+j}}\left(\frac{g_i}{g_{i+1+j}}-1\right)^2
\left[\ln\left(1+\frac{\gamma_{i+1+j}g_{i+1+j}}{\sum_{k=1}^{i+j}\gamma_kg_k}\right)
\right]^2\\
&\times\exp\left[\left(\frac{g_i}{g_{i+1+j}}-1\right)
\ln\left(1+\frac{\gamma_{i+1}g_{i+1}}{\sum_{k=1}^{i+j}\gamma_kg_k}\right)\right].
\end{aligned}
\end{equation}
Then, the limit in \eqref{eq-h40} enables us to obtain the estimate
for $\prod_{l=1}^{|\lfloor N{\bf\Gamma}\rfloor|}\frac{S_{\lfloor
N{\bf\Gamma}\rfloor+\mathbf{1}_i,l}^{(1)}} {S_{\lfloor
N{\bf\Gamma}\rfloor,l}^{(1)}}$ as $N\to\infty$:
\begin{equation}\label{eq-h41}
\begin{aligned}
\prod_{l=1}^{|\lfloor N{\bf\Gamma}\rfloor|}\frac{S_{\lfloor
N{\bf\Gamma}\rfloor+\mathbf{1}_i,l}^{(1)}} {S_{\lfloor
N{\bf\Gamma}\rfloor,l}^{(1)}}=&\theta_i^{(1)}\left\{1-\frac{1}{N}\sum_{j=1}^{I-i}
\frac{1}{2\gamma_{i+j}} \left(\frac{g_i}{g_{i+j}}-1\right)^2
\left[\ln\left(1+\frac{\gamma_{i+j}g_{i+j}}{\sum_{k=1}^{i+j}\gamma_kg_k}\right)
\right]^2\right.\\
&\times\left.\exp\left[\left(\frac{g_i}{g_{i+j}}-1\right)
\ln\left(1+\frac{\gamma_{i+1}g_{i+1}}{\sum_{k=1}^{i+j}\gamma_kg_k}\right)\right]+
o\left(\frac{1}{N}\right)\right\},
\end{aligned}
\end{equation}
from which we arrive at relation \eqref{eq-h27} for
$i=1,2,\ldots,I-1$. The proof of \eqref{eq-h30} for $i=2,3,\ldots,I$
is similar.
\end{proof}

\begin{lem}\label{lem-4}
Under the assumptions of Lemma \ref{lem-3}, we have the asymptotic
expansions:
\begin{equation}\label{eq-l21}
\begin{aligned}
P_{\lfloor N{\bf\Gamma}\rfloor}^{(1)}=&C^{(1)}(2\pi
N)^{-\frac{1}{2}(I-1)}\sqrt{\frac{1}
{\gamma_1\gamma_2\cdot\ldots\cdot\gamma_I}}
\prod_{i=1}^I\exp\left({-\alpha_i^{(1)}\gamma_i+1-\gamma_i}\right)
\left(\Delta_i^{(1)}\right)^{\lfloor N\gamma_i\rfloor}\\
&\times[1+o(1)],
\end{aligned}
\end{equation}
and
\begin{equation}\label{eq-l22}
\begin{aligned}
P_{\lfloor N{\bf\Gamma}\rfloor}^{(2)}=&C^{(2)}(2\pi
N)^{-\frac{1}{2}(I-1)}\sqrt{\frac{1}
{\gamma_1\gamma_2\cdot\ldots\cdot\gamma_I}}
\prod_{i=1}^I\exp\left({-\alpha_i^{(2)}\gamma_i+1-\gamma_i}\right)
\left(\Delta_i^{(2)}\right)^{\lfloor N\gamma_i\rfloor}\\
&\times[1+o(1)],
\end{aligned}
\end{equation}
where $\alpha_i^{(1)}$ and $\alpha_i^{(2)}$, $i=1,2,\ldots,I$, are
defined by \eqref{eq-h17}, \eqref{eq-h18}, \eqref{eq-h19} and
\eqref{eq-h20}.
\end{lem}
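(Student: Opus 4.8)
The plan is to reduce the asymptotics of $P_{\lfloor N{\bf\Gamma}\rfloor}^{(1)}$ to an application of Stirling's formula, using the refined one-step expansion \eqref{eq-h27} of Lemma \ref{lem-3'} to control the lower-order terms. First I would turn the defining formula \eqref{eq-d8} into a genuine closed form by evaluating the product of partial sums $\prod_{l=1}^{|\mathbf{n}|}S_{\mathbf{n},l}^{(1)}$ block by block. In the $j$-th block the partial sums are $\sum_{m<j}n_m g_m+k g_j$ for $k=1,\ldots,n_j$, so this block contributes $g_j^{n_j}\,\Gamma(a_j+n_j+1)/\Gamma(a_j+1)$ with $a_j=(\sum_{m<j}n_m g_m)/g_j$. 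The factors $g_j^{n_j}$ cancel the $g_i^{-n_i}$ in \eqref{eq-d8}, the $j=1$ block equals $n_1!$ and cancels the matching factorial, and one is left with
\[
P_{\mathbf{n}}^{(1)}=C^{(1)}\Big(\prod_{i=1}^I\rho_i^{n_i}\Big)\prod_{j=2}^I\frac{\Gamma(a_j+n_j+1)}{\Gamma(n_j+1)\,\Gamma(a_j+1)} .
\]
The analogous identity for $P_{\mathbf{n}}^{(2)}$ follows from the backward order \eqref{eq-d7}.

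Second, I would set $n_i=\lfloor N\gamma_i\rfloor$ and apply $\ln\Gamma(z+1)=z\ln z-z+\tfrac12\ln(2\pi z)+O(1/z)$ to the three Gamma functions in each block. Because the numerator argument $a_j+n_j$ is the sum of the two denominator arguments, the $N\ln N$ and $N$ contributions of the $z\ln z-z$ parts cancel identically; what survives at order $N$ is the exponential growth, which together with $\prod_i\rho_i^{n_i}$ reassembles into $\prod_i(\Delta_i^{(1)})^{\lfloor N\gamma_i\rfloor}$, consistently with the rate already found in \eqref{eq-e20}. Each of the $I-1$ surviving blocks ($j=2,\ldots,I$) contributes one factor $(2\pi N)^{-1/2}$ from the half-logarithm terms, giving the announced order $(2\pi N)^{-\frac12(I-1)}$, while the associated amplitudes telescope (through the sums $\sum_{m\le j}\gamma_m g_m$) into $\sqrt{1/(\gamma_1\cdots\gamma_I)}$ up to a $\gamma,g$-dependent constant. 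An important bookkeeping point is that the fractional parts $\{N\gamma_i\}$ enter only through order-one pieces that cancel between numerator and denominator once the growth is re-expressed in terms of $\lfloor N\gamma_i\rfloor$, so that the limit carries no oscillation.

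Third, the constant prefactor $\prod_i\exp(-\alpha_i^{(1)}\gamma_i+1-\gamma_i)$ has to be extracted from the subleading behaviour, and this is where Lemma \ref{lem-3'} is used directly: the coefficients $\alpha_i^{(1)}$ of \eqref{eq-h17}--\eqref{eq-h18} are exactly the $1/N$ corrections of the one-step ratios, and accumulating these corrections along the buildup of each coordinate produces the factors $\exp(-\alpha_i^{(1)}\gamma_i)$, whereas the $\exp(1-\gamma_i)$ come from the $-z$ and $\tfrac12\ln(2\pi z)$ pieces of Stirling at $z\asymp N\gamma_i$. For $P_{\mathbf{n}}^{(2)}$ the argument is identical after replacing \eqref{eq-d6} by \eqref{eq-d7} and \eqref{eq-h17}--\eqref{eq-h18} by \eqref{eq-h19}--\eqref{eq-h20}; the one extra ingredient is Condition \eqref{eq-l7}, which by Lemma \ref{lem-3'} makes the series defining $P_{\mathbf{n}}^{(2)}$ convergent, so that the normalization $C^{(2)}$ and the probabilities $P_{\lfloor N{\bf\Gamma}\rfloor}^{(2)}$ are well defined.

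The step I expect to be the main obstacle is the precise determination of this $O(1)$ prefactor. The polynomial order and the geometric rate drop out cleanly, but matching the constant forces one to carry the Stirling and floor-function corrections one order further and to show that the $1/N$ corrections of \eqref{eq-h27}--\eqref{eq-h30}, summed over the $\Theta(N)$ coordinate increments, leave only a finite constant rather than a spurious power of $N$ --- a cancellation against the $\tfrac12\ln(2\pi N)$ amplitude terms that is the delicate heart of the computation and the reason the exact expressions \eqref{eq-h17}--\eqref{eq-h20} are needed. A further technical point is that the blocks in which $\sum_{m<j}n_m g_m$ is not large compared with $g_j$ (in particular the $j=1$ block) lie outside the regime where the naive Stirling expansion is valid and must be estimated separately; and one must check that the $o(1/N)$ remainders in \eqref{eq-h27}--\eqref{eq-h30} do not accumulate to an $o(1)$ error over the $\Theta(N)$ steps.
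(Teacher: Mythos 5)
Your Steps 1 and 2 take a genuinely different route from the paper's own proof. The paper never writes the Gamma\-/function closed form; it factors $P^{(1)}_{\mathbf{n}}$ through the multinomial coefficient $C_{\mathbf{n}}=|\mathbf{n}|!/(n_1!\cdots n_I!)$ and the product $\prod_{l}S^{(1)}_{\mathbf{n},l}/l$, identifies the exponential growth of the latter by a ratio-to-root inference (\eqref{eq-h42}, \eqref{eq-h16}) that is questionable because the factors of the product themselves change with $N$, obtains the constant by feeding the $1/N$-corrections of Lemma~\ref{lem-3'} into that product, and only then applies Stirling's formula, and only to $C_{\mathbf{n}}$. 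Your block identity $\prod_{l=1}^{|\mathbf{n}|}S^{(1)}_{\mathbf{n},l}=\prod_{j=1}^{I}g_j^{n_j}\,\Gamma(a_j+n_j+1)/\Gamma(a_j+1)$ (with your $a_j$) is correct, and your Step 2 is sound: the $N\ln N$ and $N$ terms cancel, the surviving growth does reassemble into $\prod_i\bigl(\Delta_i^{(1)}\bigr)^{\lfloor N\gamma_i\rfloor}$ with the oscillating floor contributions cancelling exactly as you assert, and each of the $I-1$ nontrivial blocks contributes one factor $(2\pi N)^{-1/2}$.

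The genuine gap is Step 3, and it is not merely ``delicate'' --- it would fail. First, the mechanism you propose is invalid on its own terms: multiplying one-step factors $1-\alpha_i^{(1)}/n+o(1/n)$ over the $\Theta(N)$ increments needed to build up a coordinate produces a power of $N$ (roughly $N^{-\alpha_i^{(1)}\gamma_i}$), not the constant $e^{-\alpha_i^{(1)}\gamma_i}$; moreover \eqref{eq-h27} is established only for a single step taken at a point of the ray $\lfloor N{\bf\Gamma}\rfloor$, so these expansions cannot be chained along a lattice path from the origin. Second, and decisively, Step 3 is superfluous in your scheme, because Stirling already pins down the constant: writing $A_j=\sum_{m<j}\gamma_m g_m$, block $j$ contributes the amplitude $\sqrt{(a_j+n_j)/(2\pi a_j n_j)}\sim (2\pi N)^{-1/2}\sqrt{A_{j+1}/(A_j\gamma_j)}$, and these telescope to
\begin{equation*}
(2\pi N)^{-\frac12(I-1)}\sqrt{\frac{\sum_{l=1}^{I}\gamma_l g_l}{g_1\,\gamma_1\gamma_2\cdots\gamma_I}}\,,
\end{equation*}
an algebraic expression in $(\gamma,\mathbf{g})$. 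This does not agree with the prefactor $(\gamma_1\cdots\gamma_I)^{-1/2}\prod_i\exp(-\alpha_i^{(1)}\gamma_i+1-\gamma_i)$ asserted in the Lemma: for $I=2$, $\gamma_1=\gamma_2=\tfrac12$, $g_1=1$, $g_2=2$, the telescoped constant is $\sqrt{6}\approx 2.45$, while the Lemma's expression with \eqref{eq-h17}--\eqref{eq-h18} evaluates to about $1.95$ (and this persists under any reasonable correction of the apparent typos in \eqref{eq-h17}); a direct check of $P^{(1)}_{(100,100)}\propto\binom{150}{50}$ against Stirling confirms $\sqrt{6}$. So carrying out your Steps 1--2 honestly reproduces the Lemma's polynomial order and geometric rate but a different constant; the matching you defer to Step 3 cannot be made, and your proposal therefore does not prove the statement as printed. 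Note that the paper's own proof rests on exactly the two ingredients your Step 3 invokes --- accumulation of the Lemma~\ref{lem-3'} corrections, plus the triangular-array ratio-to-root step --- which is presumably the origin of the discrepancy in the constant.
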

\begin{proof}

The proofs of \eqref{eq-l21} and \eqref{eq-l22} are similar.
Therefore, we prove \eqref{eq-l21} only. Notice that for $
\prod_{l=1}^{|\mathbf{n}|}S_{\mathbf{n},l}^{(1)} $ we have the
following obvious inequalities:
\begin{equation}\label{eq-i1}
g_1^{|\mathbf{n}|}|\mathbf{n}|!\leq\prod_{l=1}^{|\mathbf{n}|}S_{\mathbf{n},l}^{(1)}
\leq g_I^{|\mathbf{n}|}|\mathbf{n}|!.
\end{equation}
Hence, keeping in mind that for any $l$, $1\leq l<|\mathbf{n}|$, we
have
$$
\frac{S^{(1)}_{\mathbf{n},l}}{l}\leq\frac{S^{(1)}_{\mathbf{n},l+1}}{l+1},
$$
then one can arrive at the conclusion that, as $N\to\infty$, there
exists the limit
\begin{equation}\label{eq-h42}
\lim_{N\to\infty}\sqrt[N]{\frac{\prod_{l=1}^{|\lfloor
N{\bf\Gamma}\rfloor|} S_{\lfloor N{\bf\Gamma}\rfloor,l}^{(1)}}{N!}}=
\lim_{N\to\infty}\exp\left[\frac{1}{N}\sum_{l=1}^{|\lfloor
N{\bf\Gamma}\rfloor|} \ln\left(\frac{S^{(1)}_{\lfloor
N{\bf\Gamma}\rfloor,l}}{l}\right)\right]=g^{(1)}.
\end{equation}
One can find the constant $g^{(1)}$ in \eqref{eq-h42} as follows. We
have
\begin{equation}\label{eq-h16}
\lim_{N\to\infty}\frac{\prod_{l=1}^{|\lfloor N{\bf\Gamma}\rfloor|}
\frac{S^{(1)}_{\lfloor
N{\bf\Gamma}\rfloor,l}}{l}}{\prod_{l=1}^{|\lfloor
N{\bf\Gamma}\rfloor|-1}\frac{S^{(1)}_{\lfloor
N{\bf\Gamma}\rfloor,l}}{l}}=\lim_{N\to\infty}\frac{S^{(1)}_{\lfloor
N{\bf\Gamma}\rfloor,|\lfloor
N{\bf\Gamma}\rfloor|}}{N}=\lim_{N\to\infty}\frac{\sum_{i=1}^Ig_i\lfloor
N\gamma_i\rfloor}{N}=\sum_{j=1}^I \gamma_jg_j.
\end{equation}
This enables us to conclude that $g^{(1)}$ must be equal to the
right-hand side of \eqref{eq-h16}, i.e. $g^{(1)}= \sum_{i=1}^I
\gamma_ig_i$.

Let us find an asymptotic expansion for $\prod_{l=1}^{|\lfloor
N{\bf\Gamma}\rfloor|}\frac{S_{\lfloor
N{\bf\Gamma}\rfloor,l}^{(1)}}{l}$ as $N\to\infty$.

Denote  $
C_{\mathbf{n}}=\frac{|\mathbf{n}|!}{n_1!n_2!\cdot\ldots\cdot n_I!}.
$ Then,
$$
\frac{C_{\mathbf{n}+\mathbf{1}_i}}{C_{\mathbf{n}}}=\frac{{|\mathbf{n}|+1}}{n_i+1},
$$
and as $N\to\infty$, we obtain the expansion
\begin{equation}\label{eq-h14}
\frac{C_{\lfloor N{\bf\Gamma}\rfloor+\mathbf{1}_i}}{C_{\lfloor
N{\bf\Gamma}\rfloor}}=\frac{1}{\gamma_i}
\left[1-\frac{1}{N}\cdot\frac{1-\gamma_i}{\gamma_i}+O\left(\frac{1}{N^2}\right)\right].
\end{equation}
Hence, taking into account Lemma \ref{lem-3'}, we arrive at the
conclusion that an asymptotic expansion for $\prod_{l=1}^{|\lfloor
N{\bf\Gamma}\rfloor|}\frac{S_{\lfloor
N{\bf\Gamma}\rfloor,l}^{(1)}}{l}$, as $N\to\infty$, is
$$
\prod_{l=1}^{|\lfloor N{\bf\Gamma}\rfloor|}\frac{S_{\lfloor
N{\bf\Gamma}\rfloor,l}^{(1)}}{l}=\left(\sum_{i=1}^I\gamma_ig_i\right)^N
\exp\left[-\sum_{i=1}^I\left(\alpha_i^{(1)}\gamma_i-(1-\gamma_i)\right)\right]
[1+o(1)],
$$
where $\alpha_{i}^{(1)}$, $i=1,2,\ldots,I$, are given by
\eqref{eq-h17} and \eqref{eq-h18}. Now, using Stirling's formula for
\eqref{eq-d8} as $N\to\infty$, one can write the expansion
\begin{equation*}\label{eq-h44}
\begin{aligned}
P_{\lfloor N{\bf\Gamma}\rfloor}^{(1)}=&C^{(1)}(2\pi
N)^{-\frac{1}{2}(I-1)}\sqrt{\frac{1}
{\gamma_1\gamma_2\cdot\ldots\cdot\gamma_I}}
\prod_{i=1}^I\exp\left({-\alpha_i^{(1)}\gamma_i+1-\gamma_i}\right)
\left(\Delta_i^{(1)}\right)^{\lfloor N\gamma_i\rfloor}\\
&\times[1+o(1)],
\end{aligned}
\end{equation*}
and the statement of the lemma follows.
\end{proof}

\begin{lem}\label{lem-5}
There exists a positive integer $n$ such that for any vector
$\mathbf{n}\in\mathcal{N}_{{\bf\Gamma},n}$ and $i=1,2,\ldots,I$, we
have the inequalities
\begin{eqnarray}
\mathcal{J}_{i,\mathbf{n} +\mathbf{1}_i}(P_{\mathbf{n}
+\mathbf{1}_i}^{(1)},P_{\mathbf{n}
}^{(1)})-\mathcal{J}_{i,\mathbf{n}}(P_{\mathbf{n}}^{(1)},P_{\mathbf{n}
-\mathbf{1}_i}^{(1)})&\geq&0,\label{eq-l1}\\
\mathcal{J}_{i,\mathbf{n} +\mathbf{1}_i}(P_{\mathbf{n}
+\mathbf{1}_i}^{(2)},P_{\mathbf{n}
}^{(2)})-\mathcal{J}_{i,\mathbf{n}}(P_{\mathbf{n}}^{(2)},P_{\mathbf{n}
-\mathbf{1}_i}^{(2)})&\leq&0,\label{eq-l6}
\end{eqnarray}
In the case $i\neq I$, the inequalities in \eqref{eq-l1} are strong
and, respectively, in the case $i\neq 1$ the inequalities in
\eqref{eq-l6} are strong.
\end{lem}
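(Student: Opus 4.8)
The plan is to reduce each bracketed difference of operators to the sign of a single scalar, which factors cleanly through the product form, and then to read that sign off from the leading-order limits already furnished by Lemma~\ref{lem-3}; the finer $1/N$ expansions of Lemma~\ref{lem-3'} turn out not to be needed here because the relevant limiting constant is nonzero.

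First I would divide the left-hand side of \eqref{eq-l1} by the strictly positive number $P_{\mathbf{n}}^{(1)}$, which preserves its sign. Writing out both operators, using $\lambda_i=\mu_i\rho_i$, and inserting the product-form ratio \eqref{eq-h1} (and its one-step-down analogue, obtained by applying \eqref{eq-h1} at base point $\mathbf{n}-\mathbf{1}_i$), the discriminatory rate prefactor $\mu_ig_i(n_i+1)/\big<\mathbf{n}+\mathbf{1}_i,\mathbf{g}\big>$ cancels against the factor $\big<\mathbf{n}+\mathbf{1}_i,\mathbf{g}\big>/((n_i+1)g_i)$ carried by the ratio $P_{\mathbf{n}+\mathbf{1}_i}^{(1)}/P_{\mathbf{n}}^{(1)}$. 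This yields the two exact identities
\[
\frac{\mathcal{J}_{i,\mathbf{n}+\mathbf{1}_i}(P_{\mathbf{n}+\mathbf{1}_i}^{(1)},P_{\mathbf{n}}^{(1)})}{P_{\mathbf{n}}^{(1)}}=\lambda_i\left(\Pi_i^{+}-1\right),\qquad \frac{\mathcal{J}_{i,\mathbf{n}}(P_{\mathbf{n}}^{(1)},P_{\mathbf{n}-\mathbf{1}_i}^{(1)})}{P_{\mathbf{n}}^{(1)}}=\frac{\mu_ig_in_i}{\big<\mathbf{n},\mathbf{g}\big>}\left(1-\Pi_i^{-}\right),
\]
where $\Pi_i^{+}=\prod_{l=1}^{|\mathbf{n}|}S_{\mathbf{n}+\mathbf{1}_i,l}^{(1)}/S_{\mathbf{n},l}^{(1)}$ and $\Pi_i^{-}=\prod_{l=1}^{|\mathbf{n}|-1}S_{\mathbf{n}-\mathbf{1}_i,l}^{(1)}/S_{\mathbf{n},l}^{(1)}$. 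Thus the difference in \eqref{eq-l1} equals $P_{\mathbf{n}}^{(1)}$ times the bracket $\lambda_i(\Pi_i^{+}-1)-\frac{\mu_ig_in_i}{\big<\mathbf{n},\mathbf{g}\big>}(1-\Pi_i^{-})$, and I need only its sign.

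Next I would let $N\to\infty$ along $\mathbf{n}=\lfloor N{\bf\Gamma}\rfloor$. By \eqref{eq-h3}, $\Pi_i^{+}\to\theta_i^{(1)}$; applying the same limit with base point $\mathbf{n}-\mathbf{1}_i$ (a bounded shift, so the direction and hence the limit are unchanged) gives $\Pi_i^{-}\to 1/\theta_i^{(1)}$, while $\mu_ig_in_i/\big<\mathbf{n},\mathbf{g}\big>\to \mu_ig_i\gamma_i/\sum_j\gamma_jg_j$. Substituting the definition \eqref{eq-d14} of $\Delta_i^{(1)}$ and simplifying with $\mu_i\rho_i=\lambda_i$, the bracket converges to
\[
\lambda_i\left(\theta_i^{(1)}-1\right)\frac{\Delta_i^{(1)}-1}{\Delta_i^{(1)}}.
\]
For $i<I$ the strict ordering of the weights makes every summand in \eqref{eq-d11} negative, so $\theta_i^{(1)}<1$; comparing \eqref{eq-d11} with \eqref{eq-d12} gives $\theta_i^{(1)}<\theta_i^{(2)}$, whence $\Delta_i^{(1)}=\Delta_i^{(2)}\theta_i^{(1)}/\theta_i^{(2)}<\Delta_i^{(2)}<1$ by \eqref{eq-d14}, \eqref{eq-d15} and hypothesis \eqref{eq-l7}. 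Both factors $\theta_i^{(1)}-1$ and $\Delta_i^{(1)}-1$ are then negative, so the limit is a strictly positive constant. Since ${\bf\Gamma}$ is fixed and $\mathcal{N}_{{\bf\Gamma},n}$ is precisely the tail $\{\lfloor N{\bf\Gamma}\rfloor:N\ge n\}$ of the ray, a strictly positive limit produces an $n$ beyond which the bracket, and therefore the difference in \eqref{eq-l1}, is strictly positive for every $\mathbf{n}\in\mathcal{N}_{{\bf\Gamma},n}$. For $i=I$ one has $\theta_I^{(1)}=1$, and more directly both operators vanish by \eqref{eq-l2}, so the difference is identically zero and the non-strict inequality holds.

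The inequality \eqref{eq-l6} is proved by the mirror-image argument built on the backward order \eqref{eq-g1.1}: the identical reduction produces a bracket converging to $\lambda_i(\theta_i^{(2)}-1)(\Delta_i^{(2)}-1)/\Delta_i^{(2)}$, where for $i>1$ one has $\theta_i^{(2)}>1$ while $\Delta_i^{(2)}<1$ by \eqref{eq-l7}, so the limit is strictly negative and \eqref{eq-l6} holds strictly; for $i=1$, \eqref{eq-l4} makes the difference vanish identically. I expect the main obstacle to be the two places where the limit must be pinned down carefully: justifying $\Pi_i^{-}\to1/\theta_i^{(1)}$ even though $\mathbf{n}-\mathbf{1}_i$ does not sit exactly on the grid ray (one checks that the altered partial sums contribute nothing in the limit), and establishing the strict chain $\Delta_i^{(1)}<\Delta_i^{(2)}<1$, i.e.\ that the forward and backward $\theta$-products lie on the correct sides of $1$. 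Everything else is the routine algebra producing the factorized limits displayed above.
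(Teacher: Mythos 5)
Your proof is correct and is essentially the paper's own argument: the equality cases $i=I$ and $i=1$ are dispatched via Lemma \ref{lem-2} exactly as in the paper, and for the strict cases you cancel the product-form factors, pass to the limit along the ray using \eqref{eq-h3} (including the same implicit step that the shifted base point $\mathbf{n}-\mathbf{1}_i$ yields the reciprocal limit $1/\theta_i^{(1)}$, which the paper also uses in going from \eqref{eq-e1} to \eqref{eq-e2}), and read off the sign from $\theta_i^{(1)}<1$ together with $\Delta_i^{(1)}<\Delta_i^{(2)}<1$, which is precisely the paper's condition \eqref{eq-e3}. The only differences are cosmetic: you normalize by $P_{\mathbf{n}}^{(1)}$ and factor the limiting constant as $\lambda_i\bigl(\theta_i^{(1)}-1\bigr)\bigl(\Delta_i^{(1)}-1\bigr)/\Delta_i^{(1)}$, whereas the paper divides by $\prod_{l}S^{(1)}_{\mathbf{n},l}$ and multiplies by $N$ to extract the same (up to the positive factor $\mu_i g_i\gamma_i$) nonzero limit.
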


\begin{proof} Note first, that in the case $i=I$
relation \eqref{eq-l1} and, respectively, in the case $i=1$ relation
\eqref{eq-l6} in Lemma \ref{lem-5} are automatically satisfied for
all $\mathbf{n}\in\mathcal{N}$, since in the case $i=I$ according to
relation \eqref{eq-l2} in Lemma \ref{lem-2} we have
$$\mathcal{J}_{I,\mathbf{n}+\mathbf{1}_I}(P^{(1)}_{\mathbf{n}+\mathbf{1}_I},
P^{(1)}_{\mathbf{n}})=
\mathcal{J}_{I,\mathbf{n}}(P^{(1)}_{\mathbf{n}},
P^{(1)}_{\mathbf{n}+\mathbf{1}_I})=0,$$ and in the case $i=1$
according to relation \eqref{eq-l4} in the same lemma we have
$$\mathcal{J}_{1,\mathbf{n}
+\mathbf{1}_1}(P^{(2)}_{\mathbf{n}+\mathbf{1}_1},P^{(2)}_{\mathbf{n}})=
\mathcal{J}_{1,\mathbf{n}}(P^{(2)}_{\mathbf{n}},P^{(2)}_{\mathbf{n}-\mathbf{1}_1})=0.$$
\end{proof}

 We prove now that there exists a positive integer $n$,
$\mathbf{n}\in\mathcal{N}_{{\bf\Gamma},n}$ such that the strong
inequalities of \eqref{eq-l1} hold for $i=1,2,\ldots,I-1$. The proof
of the strong inequalities of \eqref{eq-l6} for $i=2,3,\ldots,I$ is
similar.

Indeed, after canceling the multiplier
$C^{(1)}\frac{\lambda_i\prod_{l=1}^I\rho_l^{n_l}}{n_1!n_2!\cdot\ldots\cdot
n_I!}$ in the relations for
$$\mathcal{J}_{i,\mathbf{n}+\mathbf{1}_i}(P_{\mathbf{n}
+\mathbf{1}_i}^{(1)},P_{\mathbf{n}}^{(1)})-\mathcal{J}_{i,\mathbf{n}}
(P_{\mathbf{n}}^{(1)},P_{\mathbf{n}-\mathbf{1}_i}^{(1)}),
$$
and a small algebra the problem reduces to prove that there exists a
positive integer $n$ such that the inequality
\begin{equation}\label{eq-e9}
\frac{\rho_i}{n_ig_i}\left[\prod_{l=1}^{|\mathbf{n}|}S^{(1)}_{\mathbf{n}+\mathbf{1}_i,l}
-\prod_{l=1}^{|\mathbf{n}|}S^{(1)}_{\mathbf{n},l}\right]
-\prod_{l=1}^{|\mathbf{n}|-1}S^{(1)}_{\mathbf{n},l}+
\prod_{l=1}^{|\mathbf{n}|-1}S^{(1)}_{\mathbf{n}-\mathbf{1}_i,l}>0
\end{equation}
is true for all $\mathbf{n}\in\mathcal{N}_{{\bf\Gamma},n}$. Denote
the left-hand side of \eqref{eq-e9} by $f(\mathbf{n},\rho_i)$. It
follows from Lemma \ref{lem-2} that $f(\mathbf{n},0)>0$ for all
$\mathbf{n}\in\mathcal{N}$. From same Lemma \ref{lem-2} the
derivative of $f(\mathbf{n},\rho_i)$ satisfies the property
$\frac{\mathrm{d}f(\mathbf{n},\rho_i)}{\mathrm{d}\rho_i}<0$. Hence,
the lemma will be proved if we show that there exists a positive
integer $n$, such that for
$\mathbf{n}\in\mathcal{N}_{{\bf\Gamma},n}$ the function
$f(\mathbf{n},\rho_i)$ is positive for all $\rho_i$, under which the
probability mass function $P_{\mathbf{n}}^{(1)}$ is proper. From
\eqref{eq-e9} we have
\begin{equation}\label{eq-e1}
f(\rho_i)\prod_{l=1}^{|\mathbf{n}|}\frac{1}{S_{\mathbf{n},l}^{(1)}}=
\frac{\rho_i}{n_ig_i}\left[\prod_{l=1}^{|\mathbf{n}|}\frac{
S^{(1)}_{\mathbf{n}+\mathbf{1}_i,l}}{S_{\mathbf{n},l}^{(1)}}
-1\right]
-\frac{1}{\big<\mathbf{n},\mathbf{g}\big>}\left[1-\prod_{l=1}^{|\mathbf{n}|-1}
\frac{S^{(1)}_{\mathbf{n}-\mathbf{1}_i,l}}{S^{(1)}_{\mathbf{n},l}}\right].
\end{equation}
Hence, as $N\to\infty$, according to Lemma \ref{lem-3} from
\eqref{eq-e1} we obtain
\begin{equation}\label{eq-e2}
\lim_{N\to\infty}Nf(\lfloor
N{\bf\Gamma}\rfloor,\rho_i)\prod_{l=1}^{|\lfloor
N{\bf\Gamma}\rfloor|}\frac{1} {S_{\lfloor
N{\bf\Gamma}\rfloor,l}^{(1)}}
=\frac{\rho_i}{\gamma_ig_i}\left(\theta_i^{(1)}-1\right)
-\frac{1}{\sum_{l=1}^I\gamma_lg_l}\left(1-\frac{1}{\theta_i^{(1)}}\right).
\end{equation}
The right-hand side of \eqref{eq-e2} is positive, since
\begin{equation}\label{eq-e3}
\frac{\gamma_1g_1+\gamma_2g_2+\ldots+\gamma_Ig_I}{\gamma_ig_i}\cdot\theta_i^{(1)}\cdot
\rho_i=\Delta_i^{(1)}<1.
\end{equation}
Hence, for any $\rho_i$ satisfying \eqref{eq-e3}, there exists a
large value $n$ for which $f(\mathbf{n},\rho_i)>0$ for any
$\mathbf{n}\in\mathcal{N}_{{\bf\Gamma},n}$. The lemma is proved.

\subsubsection{Final part of the proof of Theorem \ref{thm-4}.}

Let us define the set $\mathcal{G}$ as the set of all directions
$\Gamma$ for which the condition $\Delta_i^{(2)}<1$,
$i=1,2,\ldots,I$, is satisfied. According to Lemma \ref{lem-5},
there exists a set of positive integer numbers $n_{\bf\Gamma}$
denoted $\mathcal{N}(\mathcal{G})$, and we define the set
$\mathcal{C}(\mathcal{G},\mathcal{N}(\mathcal{G}))
=\cup_{{\bf\Gamma}\in\mathcal{G}}\mathcal{N}_{{\bf\Gamma},n_{\bf\Gamma}}$.
Note, that the set of positive integer numbers $n_{\bf\Gamma}$ can
be chosen such that $n_{\bf\Gamma}\geq N_0$, where $N_0$ is a
sufficiently large integer number.

For all
$\mathbf{n}\in\mathcal{C}(\mathcal{G},\mathcal{N}(\mathcal{G}))$
from Lemma \ref{lem-5} we have:
\begin{eqnarray}
\sum_{i=1}^I\left[\mathcal{J}_{i,\mathbf{n}+\mathbf{1}_i}
(P_{\mathbf{n}+\mathbf{1}_i}^{(1)},
P_{\mathbf{n}}^{(1)})-\mathcal{J}_{i,\mathbf{n}}(P_{\mathbf{n}}^{(1)},
P_{\mathbf{n}-\mathbf{1}_i}^{(1)})\right]&>&0,\label{eq-h7}\\
\sum_{i=1}^I\left[\mathcal{J}_{i,\mathbf{n}+\mathbf{1}_i}
(P_{\mathbf{n}+\mathbf{1}_i}^{(2)},
P_{\mathbf{n}}^{(2)})-\mathcal{J}_{i,\mathbf{n}}(P_{\mathbf{n}}^{(2)},
P_{\mathbf{n}-\mathbf{1}_i}^{(2)})\right]&<&0.\label{eq-h8}
\end{eqnarray}
Hence, taking into account \eqref{eq-c4} together with \eqref{eq-h7}
and \eqref{eq-h8}, we can conclude that there exists the sequence of
constants $\beta_{\mathbf{n}}$, $0<\beta_{\mathbf{n}}<1$,
$\mathbf{n}\in\mathcal{C}(\mathcal{G},\mathcal{N}(\mathcal{G}))$,
such that
\begin{equation}\label{eq-h12}
\begin{aligned}
&\beta_{\mathbf{n}}\sum_{i=1}^I\left[\mathcal{J}_{i,\mathbf{n}+\mathbf{1}_i}
(P_{\mathbf{n}+\mathbf{1}_i}^{(1)}, P_{\mathbf{n}}^{(1)})-
\mathcal{J}_{i,\mathbf{n}}(P_{\mathbf{n}}^{(1)},
P_{\mathbf{n}-\mathbf{1}_i}^{(1)})\right]\\
&+(1-\beta_{\mathbf{n}})\sum_{i=1}^I\left[\mathcal{J}_{i,\mathbf{n}+\mathbf{1}_i}
(P_{\mathbf{n}+\mathbf{1}_i}^{(2)}, P_{\mathbf{n}}^{(2)})-
\mathcal{J}_{i,\mathbf{n}}(P_{\mathbf{n}}^{(2)},
P_{\mathbf{n}-\mathbf{1}_i}^{(2)})\right]\\
=&\sum_{i=1}^I\left[\mathcal{J}_{i,\mathbf{n}+\mathbf{1}_i}
(P_{\mathbf{n}+\mathbf{1}_i},P_{\mathbf{n}})-
\mathcal{J}_{i,\mathbf{n}}(P_{\mathbf{n}},P_{\mathbf{n}-\mathbf{1}_i})\right]=0.
\end{aligned}
\end{equation}

The system of equations \eqref{eq-h12} is basic for our following
study.
Notice, that for the left-hand side of \eqref{eq-h12} we have
\begin{equation*}
\begin{aligned}
&\beta_{\mathbf{n}}\sum_{i=1}^I\left[\mathcal{J}_{i,\mathbf{n}+\mathbf{1}_i}
(P_{\mathbf{n}+\mathbf{1}_i}^{(1)}, P_{\mathbf{n}}^{(1)})-
\mathcal{J}_{i,\mathbf{n}}(P_{\mathbf{n}}^{(1)},
P_{\mathbf{n}-\mathbf{1}_i}^{(1)})\right]\\
&+(1-\beta_{\mathbf{n}})\sum_{i=1}^I\left[\mathcal{J}_{i,\mathbf{n}+\mathbf{1}_i}
(P_{\mathbf{n}+\mathbf{1}_i}^{(2)}, P_{\mathbf{n}}^{(2)})-
\mathcal{J}_{i,\mathbf{n}}(P_{\mathbf{n}}^{(2)},
P_{\mathbf{n}-\mathbf{1}_i}^{(2)})\right]\\
=&\sum_{i=1}^I\left[\mathcal{J}_{i,\mathbf{n}+\mathbf{1}_i}
\big(\beta_{\mathbf{n}}P_{\mathbf{n}+\mathbf{1}_i}^{(1)}
+(1-\beta_{\mathbf{n}})P_{\mathbf{n}+\mathbf{1}_i}^{(2)},
\beta_{\mathbf{n}}P_{\mathbf{n}}^{(1)}
+(1-\beta_{\mathbf{n}})P_{\mathbf{n}}^{(2)}\big)\right.\\
&-\left.
\mathcal{J}_{i,\mathbf{n}}\big(\beta_{\mathbf{n}}P_{\mathbf{n}}^{(1)}
+(1-\beta_{\mathbf{n}})P_{\mathbf{n}}^{(2)},
\beta_{\mathbf{n}}P_{\mathbf{n}-\mathbf{1}_i}^{(1)}
+(1-\beta_{\mathbf{n}})P_{\mathbf{n}-\mathbf{1}_i}^{(2)}\big)\right],
\end{aligned}
\end{equation*}
and hence, \eqref{eq-h12} can be rewritten
\begin{equation}\label{eq-h31}
\begin{aligned}
&\sum_{i=1}^I\left[\mathcal{J}_{i,\mathbf{n}+\mathbf{1}_i}
(P_{\mathbf{n}+\mathbf{1}_i},P_{\mathbf{n}})-
\mathcal{J}_{i,\mathbf{n}}(P_{\mathbf{n}},P_{\mathbf{n}-\mathbf{1}_i})\right]\\
=&\sum_{i=1}^I\left[\mathcal{J}_{i,\mathbf{n}+\mathbf{1}_i}
\big(\beta_{\mathbf{n}}P_{\mathbf{n}+\mathbf{1}_i}^{(1)}
+(1-\beta_{\mathbf{n}})P_{\mathbf{n}+\mathbf{1}_i}^{(2)},
\beta_{\mathbf{n}}P_{\mathbf{n}}^{(1)}
+(1-\beta_{\mathbf{n}})P_{\mathbf{n}}^{(2)}\big)\right.\\
&-\left.
\mathcal{J}_{i,\mathbf{n}}\big(\beta_{\mathbf{n}}P_{\mathbf{n}}^{(1)}
+(1-\beta_{\mathbf{n}})P_{\mathbf{n}}^{(2)},
\beta_{\mathbf{n}}P_{\mathbf{n}-\mathbf{1}_i}^{(1)}
+(1-\beta_{\mathbf{n}})P_{\mathbf{n}-\mathbf{1}_i}^{(2)}\big)\right].
\end{aligned}
\end{equation}

The left-hand side of \eqref{eq-h31} equated to zero defines the
system of linear equations for $P_{\mathbf{n}}$, and the right-hand
of \eqref{eq-h31} equated to zero defines the system of equations
for the convex combination
$\beta_{\mathbf{n}}P_{\mathbf{n}}^{(1)}+(1-\beta_{\mathbf{n}})P_{\mathbf{n}}^{(2)}$.
For $\mathbf{n}\in\mathcal{C}(\mathcal{G},\mathcal{N}(\mathcal{G}))$
these systems of equations are identical. However, they are
expressed via the values $P_{\mathbf{n}-\mathbf{1}_i}$ and
$\beta_{\mathbf{n}}P_{\mathbf{n}-\mathbf{1}_i}^{(1)}+(1-
\beta_{\mathbf{n}})P_{\mathbf{n}-\mathbf{1}_i}^{(2)}$, respectively,
in the first and second equations, in which if $\mathbf{n}$ is a
boundary vector of
$\mathcal{C}(\mathcal{G},\mathcal{N}(\mathcal{G}))$, the vector
$\mathbf{n}-\mathbf{1}_i$ may not belong to the set
$\mathcal{C}(\mathcal{G},\mathcal{N}(\mathcal{G}))$.

For
$\{\mathbf{n},i\}\in\mathcal{C}^0(\mathcal{G},\mathcal{N}(\mathcal{G}))$,
let $P_{\mathbf{n}-\mathbf{1}_i}=d_{\mathbf{n},i}$
and let
\begin{equation*}\label{eq-e6}
\sum_{\mathbf{n}\in\mathcal{C}(\mathcal{G},\mathcal{N}(\mathcal{G}))}P_{\mathbf{n}}=p>0,
\end{equation*}
and
\begin{equation*}\label{eq-e7}
\sum_{\mathbf{n}\in\mathcal{C}(\mathcal{G},\mathcal{N}(\mathcal{G}))}
[\beta_{\mathbf{n}}P_{\mathbf{n}}^{(1)}
+(1-\beta_{\mathbf{n}})P_{\mathbf{n}}^{(2)}]=p^*>0,
\end{equation*}
where the constants $p$ and $p^*$ are the normalizing constants, $p$
depends on the values $d_{\mathbf{n},i}$
. Notice, that for the left-hand side of \eqref{eq-h31} equated to
zero,
$\mathbf{n}\in\mathcal{C}(\mathcal{G},\mathcal{N}(\mathcal{G}))$, we
have the following system of equations:
\begin{equation}\label{eq-h23}
\sum_{i=1}^I\left[\mathcal{J}_{i,\mathbf{n}+\mathbf{1}_i}
\left(\frac{P_{\mathbf{n}+\mathbf{1}_i}}{P_{\mathbf{n}}},1\right)-
\mathcal{J}_{i,\mathbf{n}}\left(1,\frac{P_{\mathbf{n}-\mathbf{1}_i}}{P_{\mathbf{n}}}
\right)\right]=0.
\end{equation}
For the right-hand side of \eqref{eq-h31} equated to zero,
$\mathbf{n}\in\mathcal{C}(\mathcal{G},\mathcal{N}(\mathcal{G}))$,
the system of equations is similar:
\begin{equation}\label{eq-h24}
\begin{aligned}
&\sum_{i=1}^I\left[\mathcal{J}_{i,\mathbf{n}+\mathbf{1}_i}
\left(\frac{\beta_{\mathbf{n}}P_{\mathbf{n}+\mathbf{1}_i}^{(1)}
+(1-\beta_{\mathbf{n}})P_{\mathbf{n}+\mathbf{1}_i}^{(2)}}{
\beta_{\mathbf{n}}P_{\mathbf{n}}^{(1)}
+(1-\beta_{\mathbf{n}})P_{\mathbf{n}}^{(2)}},1\right)\right.\\
&-\left. \mathcal{J}_{i,\mathbf{n}}\left(1,
\frac{\beta_{\mathbf{n}}P_{\mathbf{n}-\mathbf{1}_i}^{(1)}
+(1-\beta_{\mathbf{n}})P_{\mathbf{n}-\mathbf{1}_i}^{(2)}}
{\beta_{\mathbf{n}}P_{\mathbf{n}}^{(1)}
+(1-\beta_{\mathbf{n}})P_{\mathbf{n}}^{(2)}}\right)\right]=0.
\end{aligned}
\end{equation}

With the aforementioned initial conditions
$P_{\mathbf{n}-\mathbf{1}_i}=d_{\mathbf{n},i}$,
$\{\mathbf{n},i\}\in\mathcal{C}^0(\mathcal{G},\mathcal{N}(\mathcal{G}))$,
the system of equations \eqref{eq-h23}
is uniquely defined. For any $\epsilon>0$ the value $N_0$ can be
chosen so large, that for $N\geq N_0$ and all $i=1,2,\ldots,I$,
\begin{equation}\label{eq-h45}
\left|\frac{P_{\lfloor N{\bf\Gamma}\rfloor-\mathbf{1}_i}}{P_{\lfloor
N{\bf\Gamma}\rfloor}}-\frac{P_{\lfloor
N{\bf\Gamma}\rfloor}}{P_{\lfloor
N{\bf\Gamma}\rfloor+\mathbf{1}_i}}\right|<\epsilon,
\end{equation}
and similarly we have
\begin{equation}\label{eq-h46}
\begin{aligned}
&\left|\frac {\beta_{\lfloor N{\bf\Gamma}\rfloor}P_{\lfloor
N{\bf\Gamma}\rfloor-\mathbf{1}_i}^{(1)} +(1-\beta_{\lfloor
N{\bf\Gamma}\rfloor})P_{\lfloor
N{\bf\Gamma}\rfloor-\mathbf{1}_i}^{(2)}} {\beta_{\lfloor
N{\bf\Gamma}\rfloor}P_{\lfloor N{\bf\Gamma}\rfloor}^{(1)}
+(1-\beta_{\lfloor N{\bf\Gamma}\rfloor})P_{\lfloor
N{\bf\Gamma}\rfloor}^{(2)}}\right.\\
&-\left.\frac {\beta_{\lfloor
N{\bf\Gamma}\rfloor+\mathbf{1}_i}P_{\lfloor
N{\bf\Gamma}\rfloor}^{(1)} +(1-\beta_{\lfloor
N{\bf\Gamma}\rfloor+\mathbf{1}_i})P_{\lfloor
N{\bf\Gamma}\rfloor}^{(2)}} {\beta_{\lfloor
N{\bf\Gamma}\rfloor+\mathbf{1}_i}P_{\lfloor
N{\bf\Gamma}\rfloor+\mathbf{1}_i}^{(1)} +(1-\beta_{\lfloor
N{\bf\Gamma}\rfloor+\mathbf{1}_i})P_{\lfloor
N{\bf\Gamma}\rfloor+\mathbf{1}_i}^{(2)}}\right|<\epsilon.
\end{aligned}
\end{equation}
On the other hand, the system of equations \eqref{eq-h31} implies a
continuous correspondence between the systems of equations
\eqref{eq-h23} and \eqref{eq-h24}. It means that for any $\iota>0$
there exists the value $\epsilon>0$ such that \eqref{eq-h45} and
\eqref{eq-h46} imply
\begin{equation*}
\left|\frac{P_{\lfloor N{\bf\Gamma}\rfloor-\mathbf{1}_i}}{P_{\lfloor
N{\bf\Gamma}\rfloor}}-\frac {\beta_{\lfloor
N{\bf\Gamma}\rfloor}P_{\lfloor
N{\bf\Gamma}\rfloor-\mathbf{1}_i}^{(1)} +(1-\beta_{\lfloor
N{\bf\Gamma}\rfloor})P_{\lfloor
N{\bf\Gamma}\rfloor-\mathbf{1}_i}^{(2)}} {\beta_{\lfloor
N{\bf\Gamma}\rfloor}P_{\lfloor N{\bf\Gamma}\rfloor}^{(1)}
+(1-\beta_{\lfloor N{\bf\Gamma}\rfloor})P_{\lfloor
N{\bf\Gamma}\rfloor}^{(2)}}\right|<\iota,
\end{equation*}
which in turn means
\begin{equation}\label{eq-h13}
\lim_{N\to\infty}\frac{P_{\lfloor
N{\bf\Gamma}\rfloor+\mathbf{1}_i}}{P_{\lfloor N{\bf\Gamma}\rfloor}}=
\lim_{N\to\infty}\frac {\beta_{\lfloor
N{\bf\Gamma}\rfloor}P_{\lfloor
N{\bf\Gamma}\rfloor+\mathbf{1}_i}^{(1)} +(1-\beta_{\lfloor
N{\bf\Gamma}\rfloor})P_{\lfloor
N{\bf\Gamma}\rfloor+\mathbf{1}_i}^{(2)}} {\beta_{\lfloor
N{\bf\Gamma}\rfloor}P_{\lfloor N{\bf\Gamma}\rfloor}^{(1)}
+(1-\beta_{\lfloor N{\bf\Gamma}\rfloor})P_{\lfloor
N{\bf\Gamma}\rfloor}^{(2)}}.
\end{equation}
Applying asymptotic relations \eqref{eq-e20} and \eqref{eq-e21} of
Lemma \ref{lem-3} to the right-hand side of \eqref{eq-h13} we obtain
\begin{equation}\label{eq-h15}
\lim_{N\to\infty}\frac{P_{\lfloor
N{\bf\Gamma}\rfloor+\mathbf{1}_i}}{P_{\lfloor N{\bf\Gamma}\rfloor}}=
\lim_{N\to\infty}\frac {\beta_{\lfloor
N{\bf\Gamma}\rfloor}\Delta_i^{(1)}P_{\lfloor
N{\bf\Gamma}\rfloor}^{(1)} +(1-\beta_{\lfloor
N{\bf\Gamma}\rfloor})\Delta_i^{(2)}P_{\lfloor
N{\bf\Gamma}\rfloor}^{(2)}} {\beta_{\lfloor
N{\bf\Gamma}\rfloor}P_{\lfloor N{\bf\Gamma}\rfloor}^{(1)}
+(1-\beta_{\lfloor N{\bf\Gamma}\rfloor})P_{\lfloor
N{\bf\Gamma}\rfloor}^{(2)}}.
\end{equation}

The two propositions below enable us to establish the limit in
\eqref{eq-h15}. In the following, for two sequences $\{a_N\}$ and
$\{b_N\}$ vanishing as $N\to\infty$, the writing $a_N\sim b_N$ means
that $\lim_{N\to\infty}\frac{a_N}{b_N}=1$.

\begin{prop}\label{prop-1} As $N\to\infty$,
\begin{equation}\label{eq-h9}
\begin{aligned}
&\sum_{i=1}^I\left[\mathcal{J}_{i,\lfloor
N{\bf\Gamma}\rfloor+\mathbf{1}_i}( P_{\lfloor
N{\bf\Gamma}\rfloor+\mathbf{1}_i}^{(1)}, P_{\lfloor
N{\bf\Gamma}\rfloor}^{(1)})- \mathcal{J}_{i,\lfloor
N{\bf\Gamma}\rfloor}(P_{\lfloor N\Gamma\rfloor}^{(1)}, P_{\lfloor
N{\bf\Gamma}\rfloor-\mathbf{1}_i}^{(1)})\right]\\ &\sim P_{\lfloor
N{\bf\Gamma}\rfloor}^{(1)}\sum_{i=1}^I\mathcal{J}_{i,\lfloor
N{\bf\Gamma}\rfloor}\left( {\Delta_i^{(1)}}-1,
1-\frac{1}{\Delta_i^{(1)}}\right),
\end{aligned}
\end{equation}
and
\begin{equation}\label{eq-h10}
\begin{aligned}
&\sum_{i=1}^I\left[\mathcal{J}_{i,\lfloor
N{\bf\Gamma}\rfloor+\mathbf{1}_i}( P_{\lfloor
N{\bf\Gamma}\rfloor+\mathbf{1}_i}^{(2)}, P_{\lfloor
N{\bf\Gamma}\rfloor}^{(2)})- \mathcal{J}_{i,\lfloor
N{\bf\Gamma}\rfloor}(P_{\lfloor N{\bf\Gamma}\rfloor}^{(2)},
P_{\lfloor
N{\bf\Gamma}\rfloor-\mathbf{1}_i}^{(2)})\right]\\
&\sim P_{\lfloor
N{\bf\Gamma}\rfloor}^{(2)}\sum_{i=1}^I\mathcal{J}_{i,\lfloor
N{\bf\Gamma}\rfloor}\left( {\Delta_i^{(2)}}-1,
1-\frac{1}{\Delta_i^{(2)}}\right).
\end{aligned}
\end{equation}
\end{prop}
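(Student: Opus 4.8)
The plan is to prove \eqref{eq-h9}; relation \eqref{eq-h10} is obtained in exactly the same way, with the superscript $(1)$ replaced by $(2)$ throughout. The idea is to pull the common factor $P^{(1)}_{\lfloor N{\bf\Gamma}\rfloor}$ out of every term on the left, and to show that the remaining factor tends to the same limit as the bracket on the right.

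First I would write the two operators explicitly. Using $\mathcal{J}_{i,\mathbf{n}}(X,Y)=\frac{\mu_ig_in_i}{\big<\mathbf{n},\mathbf{g}\big>}X-\lambda_iY$, the $i$-th summand on the left of \eqref{eq-h9}, divided by $P^{(1)}_{\lfloor N{\bf\Gamma}\rfloor}$, equals
$$
\frac{\mu_ig_i(\lfloor N\gamma_i\rfloor+1)}{\big<\lfloor N{\bf\Gamma}\rfloor+\mathbf{1}_i,\mathbf{g}\big>}\cdot\frac{P^{(1)}_{\lfloor N{\bf\Gamma}\rfloor+\mathbf{1}_i}}{P^{(1)}_{\lfloor N{\bf\Gamma}\rfloor}}-\lambda_i-\frac{\mu_ig_i\lfloor N\gamma_i\rfloor}{\big<\lfloor N{\bf\Gamma}\rfloor,\mathbf{g}\big>}+\lambda_i\frac{P^{(1)}_{\lfloor N{\bf\Gamma}\rfloor-\mathbf{1}_i}}{P^{(1)}_{\lfloor N{\bf\Gamma}\rfloor}}.
$$
Both fractional coefficients tend to $\frac{\mu_ig_i\gamma_i}{\sum_l\gamma_lg_l}$ as $N\to\infty$. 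The forward ratio tends to $\Delta_i^{(1)}$ by \eqref{eq-e20}. For the backward ratio I would observe that $P^{(1)}_{\lfloor N{\bf\Gamma}\rfloor}/P^{(1)}_{\lfloor N{\bf\Gamma}\rfloor-\mathbf{1}_i}$ is a one-step forward ratio based at $\lfloor N{\bf\Gamma}\rfloor-\mathbf{1}_i$, whose coordinates still grow like $N\gamma_l$; hence the computation \eqref{eq-h1}--\eqref{eq-h3} applies verbatim and gives the same limit $\Delta_i^{(1)}$, so the backward ratio tends to $1/\Delta_i^{(1)}$. Collecting these limits, the displayed expression converges to $\frac{\mu_ig_i\gamma_i}{\sum_l\gamma_lg_l}(\Delta_i^{(1)}-1)-\lambda_i(1-1/\Delta_i^{(1)})$, which is precisely the limit of the $i$-th term $\mathcal{J}_{i,\lfloor N{\bf\Gamma}\rfloor}(\Delta_i^{(1)}-1,1-1/\Delta_i^{(1)})$ of the right-hand bracket, since its coefficient $\frac{\mu_ig_i\lfloor N\gamma_i\rfloor}{\big<\lfloor N{\bf\Gamma}\rfloor,\mathbf{g}\big>}$ has the same limit.

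Summing over $i$, the factor extracted from the left side and the bracket on the right both converge to the same constant
$$
L^{(1)}=\sum_{i=1}^I\left[\frac{\mu_ig_i\gamma_i}{\sum_l\gamma_lg_l}\big(\Delta_i^{(1)}-1\big)-\lambda_i\big(1-1/\Delta_i^{(1)}\big)\right].
$$
Thus both sides of \eqref{eq-h9} equal $P^{(1)}_{\lfloor N{\bf\Gamma}\rfloor}$ times a sequence tending to $L^{(1)}$, and the asserted equivalence $\sim$ follows as soon as $L^{(1)}\neq0$.

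The main obstacle is exactly this non-vanishing: both sides carry the decaying factor $P^{(1)}_{\lfloor N{\bf\Gamma}\rfloor}$ and hence tend to zero, so $\sim$ is a statement about the ratio of two null sequences and would be meaningless if the common limit were zero. I would settle it from the explicit formulas. Using $a_i\Delta_i^{(1)}=\lambda_i\theta_i^{(1)}$ and $a_i/\theta_i^{(1)}=\lambda_i/\Delta_i^{(1)}$ (with $a_i=\frac{\mu_ig_i\gamma_i}{\sum_l\gamma_lg_l}$ and $\lambda_i=\mu_i\rho_i$), the $i$-th term of $L^{(1)}$ factors as $\lambda_i(\theta_i^{(1)}-1)(1-1/\Delta_i^{(1)})$. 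Now $\theta_i^{(1)}\le1$ because every factor in \eqref{eq-d11} has $g_i\le g_{i+j}$, while $\theta_i^{(2)}\ge1$ by \eqref{eq-d12}; hence $\Delta_i^{(1)}\le\Delta_i^{(2)}<1$ by \eqref{eq-l7}. Therefore for $i<I$ both factors $\theta_i^{(1)}-1$ and $1-1/\Delta_i^{(1)}$ are strictly negative, making the term strictly positive, while for $i=I$ it vanishes ($\theta_I^{(1)}=1$). Consequently $L^{(1)}>0$, and symmetrically $L^{(2)}<0$, in agreement with the sign information \eqref{eq-h7}--\eqref{eq-h8} furnished by Lemma \ref{lem-5}. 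This gives $\lim_{N\to\infty}(\text{l.h.s.})/(\text{r.h.s.})=L^{(1)}/L^{(1)}=1$ and completes the proof.
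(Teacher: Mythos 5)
Your proof is correct, and its core is the same as the paper's: divide both sides by $P^{(1)}_{\lfloor N{\bf\Gamma}\rfloor}$, identify the forward ratio limit $\Delta_i^{(1)}$ and the backward ratio limit $1/\Delta_i^{(1)}$ via Lemma \ref{lem-3} (the paper does this through explicit $\epsilon$-sandwich bounds, as in \eqref{eq-h25}, rather than your direct limit computation, but the content is identical), and observe that the coefficients $\frac{\mu_ig_i(\lfloor N\gamma_i\rfloor+1)}{\langle\lfloor N{\bf\Gamma}\rfloor+\mathbf{1}_i,\mathbf{g}\rangle}$ and $\frac{\mu_ig_i\lfloor N\gamma_i\rfloor}{\langle\lfloor N{\bf\Gamma}\rfloor,\mathbf{g}\rangle}$ share the limit $\frac{\mu_ig_i\gamma_i}{\sum_l\gamma_lg_l}$. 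Where you go beyond the paper is the last paragraph: since both sides of \eqref{eq-h9} are null sequences and ``$\sim$'' is defined as the ratio tending to $1$, the argument genuinely requires the common limit $L^{(1)}$ to be nonzero, and the paper never verifies this in its proof of the proposition (the strict finite-$N$ inequalities of Lemma \ref{lem-5} do not by themselves prevent the limit from degenerating to $0$). Your factorization of the $i$-th term of $L^{(1)}$ as $\lambda_i(\theta_i^{(1)}-1)\bigl(1-1/\Delta_i^{(1)}\bigr)$ is correct (it follows from the identities $a_i\Delta_i^{(1)}=\lambda_i\theta_i^{(1)}$ and $\lambda_i\theta_i^{(1)}/\Delta_i^{(1)}=a_i$ with $a_i=\frac{\mu_i\gamma_ig_i}{\sum_l\gamma_lg_l}$), and together with $\theta_i^{(1)}<1$ for $i<I$, $\theta_I^{(1)}=1$, and $\Delta_i^{(1)}\leq\Delta_i^{(2)}<1$ it yields $L^{(1)}>0$ and, symmetrically, $L^{(2)}<0$. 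This closes a gap the paper leaves implicit, and it buys, as a by-product, the sign and non-degeneracy information that the paper only later invokes in Proposition \ref{prop-2}. The one step you state somewhat loosely — that the backward ratio is a forward ratio based at $\lfloor N{\bf\Gamma}\rfloor-\mathbf{1}_i$ and the computation \eqref{eq-h1}--\eqref{eq-h3} ``applies verbatim'' — is acceptable, since all the limits there depend only on the leading-order growth $\lfloor N\gamma_l\rfloor\sim N\gamma_l$ and are insensitive to an $O(1)$ shift of one coordinate; the paper itself asserts the corresponding fact with no more justification.
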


\begin{proof} From Lemma \ref{lem-3}, we have
$\lim_{N\to\infty}\frac{P_{\lfloor
N{\bf\Gamma}\rfloor+\mathbf{1}_i}^{(1)}}{P_{\lfloor
N{\bf\Gamma}\rfloor}^{(1)}} =\Delta_i^{(1)}$ and
$\lim_{N\to\infty}\frac{P_{\lfloor
N{\bf\Gamma}\rfloor+\mathbf{1}_i}^{(2)}}{P_{\lfloor
N{\bf\Gamma}\rfloor}^{(2)}} =\Delta_i^{(2)}$. Assume that $N_0$ is
chosen large enough such that for all $N>N_0$ the inequalities
$|P_{\lfloor N{\bf\Gamma}\rfloor}^{(1)}\Delta_i^{(1)}-P_{\lfloor
N{\bf\Gamma}\rfloor-\mathbf{1}_i}^{(1)}| <\epsilon P_{\lfloor
N{\bf\Gamma}\rfloor-\mathbf{1}_i}^{(1)}$ and $|P_{\lfloor
N{\bf\Gamma}\rfloor}^{(2)}\Delta_i^{(2)}-P_{\lfloor
N{\bf\Gamma}\rfloor-\mathbf{1}_i}^{(2)}| <\epsilon P_{\lfloor
N{\bf\Gamma}\rfloor-\mathbf{1}_i}^{(2)}$, $i=1,2,\ldots,I$ are
satisfied. Then, for the upper bound we obtain
\begin{equation}\label{eq-h25}
\begin{aligned}
&\sum_{i=1}^I\left[\mathcal{J}_{i,\lfloor
N{\bf\Gamma}\rfloor+\mathbf{1}_i} \left( \frac{P_{\lfloor
N{\bf\Gamma}\rfloor+\mathbf{1}_i}^{(1)}}{P_{\lfloor
N{\bf\Gamma}\rfloor}^{(1)}}, 1\right)- \mathcal{J}_{i,\lfloor
N{\bf\Gamma}\rfloor}\left(1, \frac{P_{\lfloor
N{\bf\Gamma}\rfloor-\mathbf{1}_i}^{(1)}}{P_{\lfloor
N{\bf\Gamma}\rfloor}^{(1)}}\right)\right]\\
\leq&\sum_{i=1}^I\left[\mathcal{J}_{i,\lfloor
N{\bf\Gamma}\rfloor+\mathbf{1}_i}\left( {\Delta_i^{(1)}}, 1\right)-
\mathcal{J}_{i,\lfloor N{\bf\Gamma}\rfloor}\left(1,
\frac{1}{\Delta_i^{(1)}}\right)\right]\\
&+\epsilon \sum_{i=1}^I\mu_ig_i\left[\frac{\lfloor N\gamma_i\rfloor
+1} {\big<\lfloor N{\bf\Gamma}\rfloor+\mathbf{1}_i,\mathbf{g}\big>}
+\frac{\lambda_i}
{\Delta_i^{(1)}}\right]\\
=&\sum_{i=1}^I\left[\mathcal{J}_{i,\lfloor
N{\bf\Gamma}\rfloor}\left(
{\Delta_i^{(1)}}-1, 1-\frac{1}{\Delta_i^{(1)}}\right)\right]\\
&+\epsilon \sum_{i=1}^I\mu_ig_i\left[\frac{\lfloor
N\gamma_i\rfloor+1} {\big<\lfloor
N{\bf\Gamma}\rfloor+\mathbf{1}_i,\mathbf{g}\big>} +\frac{\lambda_i}
{\Delta_i^{(1)}}\right]\\
&+ \sum_{i=1}^I\mu_ig_i\left[\frac{\lfloor
N\gamma_i\rfloor+1}{\big<\lfloor N{\bf\Gamma}\rfloor
+\mathbf{1}_i,\mathbf{g}\big>}-\frac{\lfloor
N\gamma_i\rfloor}{\big<\lfloor
N{\bf\Gamma}\rfloor,\mathbf{g}\big>}\right].
\end{aligned}
\end{equation}
The estimate for the lower bound is similar.

Clearly, the terms $$\epsilon
\sum_{i=1}^I\mu_ig_i\left[\frac{\lfloor N\gamma_i\rfloor+1}
{\big<\lfloor N\Gamma\rfloor+\mathbf{1}_i,\mathbf{g}\big>}
+\frac{\lambda_i} {\Delta_i^{(1)}}\right]$$ and
$$\sum_{i=1}^I\mu_ig_i\left[\frac{\lfloor
N\gamma_i\rfloor+1}{\big<\lfloor N\Gamma\rfloor
+\mathbf{1}_i,\mathbf{g}\big>}-\frac{\lfloor
N\gamma_i\rfloor}{\big<\lfloor
N\Gamma\rfloor,\mathbf{g}\big>}\right]$$ in \eqref{eq-h25} vanish as
$\epsilon\to0$ and $N\to\infty$. Hence, \eqref{eq-h9} follows. The
proof of  \eqref{eq-h10} is similar.
\end{proof}

\begin{prop}\label{prop-2}
As $N\to\infty$, the sequence of $\beta_{\lfloor
N{\bf\Gamma}\rfloor}$ tends to $1$. Furthermore,
$$
1-\beta_{\lfloor
N{\bf\Gamma}\rfloor}=\frac{\sum_{i=1}^I\left[\mu_i{\gamma_ig_i}{\big[\sum_{l=1}^I
\gamma_lg_l\big]^{-1}}(\Delta_i^{(1)}-1)
-\lambda_i\big(1-\big(\Delta_i^{(1)}\big)^{-1}\big)\right]}{\sum_{i=1}^I\left[
\mu_i{\gamma_ig_i}
{\big[\sum_{l=1}^I\gamma_lg_l\big]^{-1}}(\Delta_i^{(2)}-1)
-\lambda_i\big(1-\big(\Delta_i^{(2)}\big)^{-1}\big)\right]}
$$
\begin{equation}\label{eq-h21}
\times\frac{C^{(1)}}{C^{(2)}}\prod_{i=1}^I\mathrm{e}^{\left(\alpha_i^{(2)}
-\alpha_i^{(1)}\right)\gamma_i}\left(\frac{\Delta_i^{(1)}}
{\Delta_i^{(2)}}\right)^{\lfloor N\gamma_i\rfloor}[1+o(1)],
\end{equation}
where $\alpha_{i}^{(1)}$ and $\alpha_i^{(2)}$ are given by
\eqref{eq-h17}, \eqref{eq-h18}, \eqref{eq-h19} and \eqref{eq-h20}.
\end{prop}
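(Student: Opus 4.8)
The plan is to read off $\beta_{\mathbf{n}}$ from the single linear identity \eqref{eq-h12}, and then insert the sharp asymptotics already established. For $\mathbf{n}\in\mathcal{C}(\mathcal{G},\mathcal{N}(\mathcal{G}))$ write
$$A_{\mathbf{n}}=\sum_{i=1}^I\left[\mathcal{J}_{i,\mathbf{n}+\mathbf{1}_i}(P_{\mathbf{n}+\mathbf{1}_i}^{(1)},P_{\mathbf{n}}^{(1)})-\mathcal{J}_{i,\mathbf{n}}(P_{\mathbf{n}}^{(1)},P_{\mathbf{n}-\mathbf{1}_i}^{(1)})\right],$$
and let $B_{\mathbf{n}}$ be the analogous sum built from $P^{(2)}$. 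Then \eqref{eq-h12} is simply $\beta_{\mathbf{n}}A_{\mathbf{n}}+(1-\beta_{\mathbf{n}})B_{\mathbf{n}}=0$, a scalar equation whose solution is $1-\beta_{\mathbf{n}}=A_{\mathbf{n}}/(A_{\mathbf{n}}-B_{\mathbf{n}})$. By Lemma \ref{lem-5} we have $A_{\mathbf{n}}>0$ and $B_{\mathbf{n}}<0$ on this set, so $A_{\mathbf{n}}-B_{\mathbf{n}}>0$ and $1-\beta_{\mathbf{n}}\in(0,1)$ is genuinely defined; this ratio is the object to be estimated.

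Next I would feed in Proposition \ref{prop-1}, which yields $A_{\lfloor N{\bf\Gamma}\rfloor}\sim P_{\lfloor N{\bf\Gamma}\rfloor}^{(1)}\sigma^{(1)}$ and $B_{\lfloor N{\bf\Gamma}\rfloor}\sim P_{\lfloor N{\bf\Gamma}\rfloor}^{(2)}\sigma^{(2)}$, where
$$\sigma^{(k)}=\sum_{i=1}^I\left[\mu_i\gamma_ig_i\Big(\sum_l\gamma_lg_l\Big)^{-1}(\Delta_i^{(k)}-1)-\lambda_i\big(1-1/\Delta_i^{(k)}\big)\right]$$
are exactly the numerator ($k=1$) and denominator ($k=2$) sums of \eqref{eq-h21}. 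Using $\lambda_i=\mu_i\rho_i$ together with the identities $\mu_i\gamma_ig_i(\sum_l\gamma_lg_l)^{-1}\,\Delta_i^{(k)}=\lambda_i\theta_i^{(k)}$ (immediate from \eqref{eq-d14}--\eqref{eq-d15}), each summand collapses to $(\theta_i^{(k)}-1)\,\lambda_i(1-1/\Delta_i^{(k)})$. Since $g_1<\dots<g_I$ forces $\theta_i^{(1)}\le 1\le\theta_i^{(2)}$ (cf. \eqref{eq-d11}--\eqref{eq-d12}) and Condition \eqref{eq-l7} gives $\Delta_i^{(k)}<1$, this shows $\sigma^{(1)}>0$ and $\sigma^{(2)}<0$, matching the signs of $A$ and $B$ forced by Lemma \ref{lem-5}.

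The heart of the argument is the dominant balance in $A_{\lfloor N{\bf\Gamma}\rfloor}-B_{\lfloor N{\bf\Gamma}\rfloor}$. I would invoke Lemma \ref{lem-4} to form
$$\frac{P_{\lfloor N{\bf\Gamma}\rfloor}^{(1)}}{P_{\lfloor N{\bf\Gamma}\rfloor}^{(2)}}\sim\frac{C^{(1)}}{C^{(2)}}\prod_{i=1}^I\mathrm{e}^{(\alpha_i^{(2)}-\alpha_i^{(1)})\gamma_i}\left(\frac{\Delta_i^{(1)}}{\Delta_i^{(2)}}\right)^{\lfloor N\gamma_i\rfloor},$$
the prefactors $(2\pi N)^{-(I-1)/2}$, $\sqrt{1/(\gamma_1\cdots\gamma_I)}$ and $\mathrm{e}^{1-\gamma_i}$ cancelling. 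Because $\Delta_i^{(1)}/\Delta_i^{(2)}=\theta_i^{(1)}/\theta_i^{(2)}\le1$, with strict inequality at $i=1$ (where $\theta_1^{(1)}<1=\theta_1^{(2)}$ when $I\ge2$), this ratio tends to $0$, so $P^{(2)}$ overwhelms $P^{(1)}$. Hence $A/B\to0$, the denominator satisfies $A-B\sim -B$, and
$$1-\beta_{\lfloor N{\bf\Gamma}\rfloor}=\frac{A}{A-B}\sim\frac{A}{-B}\sim\frac{\sigma^{(1)}}{-\sigma^{(2)}}\cdot\frac{P_{\lfloor N{\bf\Gamma}\rfloor}^{(1)}}{P_{\lfloor N{\bf\Gamma}\rfloor}^{(2)}},$$
which in particular gives $\beta_{\lfloor N{\bf\Gamma}\rfloor}\to1$; substituting the ratio above produces \eqref{eq-h21}.

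I expect the dominant-balance step to be the main obstacle: one must show rigorously that replacing $A-B$ by $-B$ costs only a multiplicative $1+o(1)$, i.e. that the relative error $A/B=O(P^{(1)}/P^{(2)})$ is negligible against the leading exponential ratio $\prod_i(\Delta_i^{(1)}/\Delta_i^{(2)})^{\lfloor N\gamma_i\rfloor}$, so that the $o(1)$ claimed in \eqref{eq-h21} is legitimate. A secondary point is sign bookkeeping: since $\sigma^{(2)}<0$, the scalar prefactor multiplying the exponential ratio is $\sigma^{(1)}/(-\sigma^{(2)})=|\sigma^{(1)}/\sigma^{(2)}|>0$, which is exactly what is needed for $1-\beta_{\lfloor N{\bf\Gamma}\rfloor}$ to stay positive.
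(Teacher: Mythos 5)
Your proposal is correct and takes essentially the same route as the paper: the paper likewise reads $\beta_{\mathbf{n}}$ off the scalar relation \eqref{eq-h12}, feeds in the asymptotics \eqref{eq-h9}--\eqref{eq-h10} of Proposition \ref{prop-1} together with the expansions \eqref{eq-l21}--\eqref{eq-l22} of Lemma \ref{lem-4}, and uses $\Delta_i^{(1)}<\Delta_i^{(2)}$ (via $\theta_i^{(1)}<\theta_i^{(2)}$) to obtain the dominant balance, so that $1-\beta_{\lfloor N{\bf\Gamma}\rfloor}\sim -A_{\lfloor N{\bf\Gamma}\rfloor}/B_{\lfloor N{\bf\Gamma}\rfloor}$, which is exactly your computation. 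Your sign bookkeeping, which yields the positive prefactor $\sigma^{(1)}/(-\sigma^{(2)})$, in fact matches the final display of the paper's own proof (which carries a minus sign that is absent from the stated formula \eqref{eq-h21}), and it is the consistent version, since $(1-\beta_{\lfloor N{\bf\Gamma}\rfloor})P^{(2)}_{\lfloor N{\bf\Gamma}\rfloor}/P^{(1)}_{\lfloor N{\bf\Gamma}\rfloor}\to -c$ is precisely what produces the limit $(\Delta_i^{(1)}-c\Delta_i^{(2)})/(1-c)$ with $c=\sigma^{(1)}/\sigma^{(2)}$ claimed in Theorem \ref{thm-4}.
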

\begin{proof} It follows from Lemma \ref{lem-4} that
$P_{\lfloor
N{\bf\Gamma}\rfloor}^{(1)}=O\left(N^{-\frac{1}{2}(I-1)}\prod_{i=1}^I
\left[\Delta_i^{(1)}\right]^{\lfloor N\gamma_i\rfloor}\right)$ and
$P_{\lfloor
N{\bf\Gamma}\rfloor}^{(2)}=O\left(N^{-\frac{1}{2}(I-1)}\prod_{i=1}^I
\left[\Delta_i^{(2)}\right]^{\lfloor N\gamma_i\rfloor}\right)$ as
$N\to\infty$. Furthermore, it is readily seen from the explicit
expressions of \eqref{eq-d11} and \eqref{eq-d12} that
$\theta_i^{(1)}<1$, $i=1,2,\ldots,I-1$, $\theta_I^{(1)}=1$, and
$\theta_i^{(2)}>1$, $i=2,\ldots,I$, $\theta_1^{(2)}=1$, i.e.
$\theta_i^{(1)}<\theta_i^{(2)}$, $i=1,2,\ldots,I$. Consequently,
$\Delta_i^{(1)}<\Delta_i^{(2)}$, $i=1,2,\ldots,I.$ Recall also that
$\Delta_i^{(2)}<1$. Hence, from Proposition \ref{prop-1} we have:
\begin{equation}\label{eq-h22}
\begin{aligned}
&\lim_{N\to\infty}\prod_{l=1}^I\left(\frac{1}{\Delta_{l}^{(1)}}\right)^{\lfloor
N\gamma_l\rfloor}\sum_{i=1}^I\left[ \mathcal{J}_{i,\lfloor
N{\bf\Gamma}\rfloor+\mathbf{1}_i}( P_{\lfloor
N{\bf\Gamma}\rfloor+\mathbf{1}_i}^{(1)}, P_{\lfloor
N{\bf\Gamma}\rfloor}^{(1)})\right.\\
&\hskip 110pt\left.- \mathcal{J}_{i,\lfloor
N{\bf\Gamma}\rfloor}(P_{\lfloor N{\bf\Gamma}\rfloor}^{(1)},
P_{\lfloor N{\bf\Gamma}\rfloor-\mathbf{1}_i}^{(1)})\right]=0,
\end{aligned}
\end{equation}
and
\begin{equation}\label{eq-h26}
\begin{aligned}
&\lim_{N\to\infty}\prod_{l=1}^I\left(\frac{1}{\Delta_{l}^{(1)}}\right)^{\lfloor
N\gamma_l\rfloor}\sum_{i=1}^I\left[ \mathcal{J}_{i,\lfloor
N{\bf\Gamma}\rfloor+\mathbf{1}_i}( P_{\lfloor
N{\bf\Gamma}\rfloor+\mathbf{1}_i}^{(2)}, P_{\lfloor
N{\bf\Gamma}\rfloor}^{(2)})\right.\\
&\hskip 110pt\left.- \mathcal{J}_{i,\lfloor
N{\bf\Gamma}\rfloor}(P_{\lfloor N{\bf\Gamma}\rfloor}^{(2)},
P_{\lfloor N{\bf\Gamma}\rfloor-\mathbf{1}_i}^{(2)})\right]=\infty.
\end{aligned}
\end{equation}
It follows from \eqref{eq-h22} and \eqref{eq-h26} that
$\beta_{\lfloor N{\bf\Gamma}\rfloor}$ tends to 1 as $N\to\infty$. To
obtain the exact expansion given by \eqref{eq-h21} we take into
account \eqref{eq-l21} and \eqref{eq-l22} of Lemma \ref{lem-4} and
 \eqref{eq-h9} and \eqref{eq-h10} of Proposition
\ref{prop-1}. From these estimates we obtain
\begin{eqnarray*}
1-\beta_{\lfloor N{\bf\Gamma}\rfloor}
&\sim&-\frac{P_{\lfloor
N{\bf\Gamma}\rfloor}^{(1)}\sum_{i=1}^I\mathcal{J}_{i,\lfloor
N{\bf\Gamma}\rfloor} \left(\Delta_i^{(1)}-
1,1-\big(\Delta_i^{(1)}\big)^{-1} \right)}{P_{\lfloor
N{\bf\Gamma}\rfloor}^{(2)}\sum_{i=1}^I\mathcal{J}_{i,\lfloor
N{\bf\Gamma}\rfloor}\left(\Delta_i^{(2)}-
1,1-\big(\Delta_i^{(2)}\big)^{-1}
\right)}\nonumber\\
&\sim&-\frac{\sum_{i=1}^I\left[\mu_i{\gamma_ig_i}{\big[\sum_{l=1}^I
\gamma_lg_l\big]^{-1}}(\Delta_i^{(1)}-1)
-\lambda_i\big(1-\big(\Delta_i^{(1)}\big)^{-1}\big)\right]}{\sum_{i=1}^I
\left[\mu_i{\gamma_ig_i}
{\big[\sum_{l=1}^I\gamma_lg_l\big]^{-1}}(\Delta_i^{(2)}-1)
-\lambda_i\big(1-\big(\Delta_i^{(2)}\big)^{-1}\big)\right]}\nonumber\\
&&\times\frac{C^{(1)}}{C^{(2)}}\prod_{i=1}^I\mathrm{e}^{\left(\alpha_i^{(2)}
-\alpha_i^{(1)}\right)\gamma_i} \left(\frac{\Delta_i^{(1)}}
{\Delta_i^{(2)}}\right) ^{\lfloor N\gamma_i\rfloor}.
\end{eqnarray*}
Proposition \ref{prop-2} is proved.
\end{proof}

Let us now calculate the limit in the left-hand side of
\eqref{eq-h15}. Inserting \eqref{eq-h21} into the limit in the
right-hand side of \eqref{eq-h15}, with the aid of asymptotic
expansions \eqref{eq-l21} and \eqref{eq-l22} of Lemma \ref{lem-4} we
obtain
$$
\lim_{N\to\infty}\frac{P_{\mathbf{n}+\mathbf{1}_i}}{P_{\mathbf{n}}}=\frac
{\Delta_i^{(1)}-c\Delta_i^{(2)}}{1-c},
$$
where
$$
c=\frac{\sum_{i=1}^I\left[\mu_i{\gamma_ig_i}{\big(\sum_{l=1}^I
\gamma_lg_l\big)^{-1}}(\Delta_i^{(1)}-1)
-\lambda_i\big(1-\big(\Delta_i^{(1)}\big)^{-1}\big)\right]}{\sum_{i=1}^I
\left[\mu_i{\gamma_ig_i}
{\big(\sum_{l=1}^I\gamma_lg_l\big)^{-1}}(\Delta_i^{(2)}-1)
-\lambda_i\big(1-\big(\Delta_i^{(2)}\big)^{-1}\big)\right]}.
$$
Theorem \ref{thm-4} is proved.

\subsubsection{The proof of Corollary \ref{cor-1}.}
Assume first that $\gamma_1$, $\gamma_2$,\ldots, $\gamma_I$ are
rational numbers. Denote $\delta=\inf\{x: \gamma_ix\in\mathbb{N},
i=1,2,\ldots,I\}$. Then, from Theorem \ref{thm-4} we obtain
$$
\lim_{N\to\infty}\frac{P_{\lfloor N{\bf\Gamma}\rfloor
+\delta{\bf\Gamma}}}{P_{\lfloor
N{\bf\Gamma}\rfloor}}=\prod_{i=1}^I\left(\Delta_i\right)^{\delta\gamma_i}.
$$
Then, denoting $M=\max\{m: \delta m\leq N\}$ we obtain
\begin{equation}\label{eq-h11.1}
\begin{aligned}
\delta \sum_{i=1}^I\gamma_i\ln\Delta_i=&\lim_{N\to\infty}(\ln
P_{\lfloor N{\bf\Gamma}\rfloor +\delta{\bf\Gamma}}-\ln P_{\lfloor
N{\bf\Gamma}\rfloor})\\
=&\lim_{N\to\infty}\frac{\delta}{N}\left[\sum_{m=1}^{M-1}(\ln
P_{\delta
(m+1){\bf\Gamma}}-\ln P_{\delta m{\bf\Gamma}})\right]\\
&+\underbrace{\lim_{N\to\infty}\frac{\delta}{N}\left[\ln P_{\lfloor
N{\bf\Gamma}\rfloor}-\ln
P_{\delta M{\bf\Gamma}}\right]}_{=0}\\
&+\underbrace{\lim_{N\to\infty}\frac{\delta}{N}\left[\ln P_{\lfloor
N{\bf\Gamma}\rfloor+\delta{\bf\Gamma}}-\ln P_{\lfloor
N{\bf\Gamma}\rfloor}\right]}_{=0}\\
=&\lim_{N\to\infty}\frac{\delta}{N}\ln P_{\delta M{\bf\Gamma}}=
\lim_{N\to\infty}\frac{\delta}{N}\ln P_{\lfloor
N{\bf\Gamma}\rfloor}.
\end{aligned}
\end{equation}
Hence,
\begin{equation}\label{eq-h11}
\lim_{N\to\infty}\frac{\ln P_{\lfloor
N{\bf\Gamma}\rfloor}}{N}=\sum_{i=1}^I\gamma_i\ln\Delta_i.
\end{equation}
Assume now, that there is at least one of $\gamma_1$,
$\gamma_2$,\ldots, $\gamma_I$ that is irrational. Then, there is a
sequence of rational numbers $\gamma_{1,n}$, $\gamma_{2,n}$,\ldots,
$\gamma_{I,n}$ that converges to the limit $\gamma_1$,
$\gamma_2$,\ldots, $\gamma_I$. Denote $\delta_n=\inf\{x:
\gamma_{i,n}x\in\mathbb{N}, i=1,2,\ldots,I\}$. Then, for any
$\{\gamma_{1,n},\gamma_{2,n},\ldots, \gamma_{I,n}\}$, the limiting
relation of \eqref{eq-h11} holds. Then, keeping in mind that
$\Delta_i=\frac{\Delta_i^{(1)}-c\Delta_i^{(2)}}{1-c}$ is continuous
in $\gamma_1$, $\gamma_2$,\ldots, $\gamma_I$, because each of
$\Delta_i^{(1)}$, $\Delta_i^{(2)}$ and $c$ is continuous in
$\gamma_1$, $\gamma_2$,\ldots, $\gamma_i$, one can take a limit in
\eqref{eq-h11.1} as $\delta_n$ increases to infinity to arrive at
\eqref{eq-h11}. The corollary is proved.

\section{Most likely asymptotic direction}\label{Num}

Asymptotic Theorem \ref{thm-4} is obtained under the assumption that
$n_1=\lfloor N\gamma_1\rfloor$, $n_2=\lfloor N\gamma_2\rfloor$,
\ldots, $n_I=\lfloor N\gamma_I\rfloor$ for large value $N$. By
\textit{most likely direction} we mean such values $\gamma_1$,
$\gamma_2$, \ldots, $\gamma_I$ that minimize $
\big(-\lim_{N\to\infty}\frac{1}{N}\ln P_{\lfloor
N{\bf\Gamma}\rfloor}\big).$
 Then, the problem is
to minimize
$$-\sum_{i=1}^I\gamma_i\ln\frac{\Delta_i^{(1)}-c\Delta_i^{(2)}}{1-c},$$
subject to the constraints
\begin{eqnarray*}
\Delta_i^{(2)}&<&1,\quad i=1,2,\ldots, I,\\
\sum_{l=1}^I\gamma_lg_l&<&\gamma_ig_i\Delta_i^{(2)},\quad i=1,2,\ldots, I,\\
\sum_{i=1}^I\gamma_i&=&1.
\end{eqnarray*}
This is a convex optimization problem. It can be solved by the
interior point method \cite{Boyd}. Some numerical examples for its
solution are given in Table 1. For the numerical calculations, the
following set of parameters is taken: $I=2$, $\mu_1=\mu_2=1$ and
$\lambda_1=0.2$, $\lambda_2=0.3$. The value $g_1=2$ is taken fixed
for all calculations in the table. The variable parameter $g_2$
takes the values 2, 2.5, 3, 3.5 and 4. The case where $g_2=g_1=2$
(the first row in the table) is related to the Egalitarian PS
system.

\begin{table}
    \begin{center}
        \caption{The table of optimal values of $\gamma_1$ and $\gamma_2$ for DPS systems
        with two classes of flow}
            \begin{tabular}{c|c|c}\hline
            Variable parameter & Optimal value & Optimal value\\
            $g_2$              & $\gamma_1$ & $\gamma_2$\\
            \hline
            2                  & .401       & .599\\
            2.5                & .435       & .565\\
            3                  & .474       & .526\\
            3.5                & .633       & .367\\
            4                  & .622       & .378\\
            \hline
            \end{tabular}
    \end{center}
\end{table}

\section{Concluding remarks and an open problem}\label{Conclusion}

In the present paper we established a characterization theorem on
impossibility of presenting the stationary probabilities in closed
geometric form. Implicitly we have shown that the stationary
probability cannot have the  form
$F(\mathbf{n},\mathbf{g})G(\mathbf{n},\lambda,\mu)$, where
$\lambda=\{\lambda_1,\lambda_2,\ldots,\lambda_I\}$ and
$\mu=\{\mu_1,\mu_2,\ldots,\mu_I\}$, since if $P_{\mathbf{n}}$ can be
represented in this form, then it can be shown that
$G(\mathbf{n},\lambda,\mu)$ must be equal to
$(1-\rho)\prod_{i=1}^I\rho_i^{n_i}$.

While for Egalitarian PS systems the explicit formula for the
stationary distribution is known and has a relatively simple closed
geometric form, the analysis of the DPS system it very hard. We have
provided a full asymptotic analysis of the tail probabilities that
is based on an analysis of the system of the equations for the
stationary probabilities. The method of asymptotic analysis uses
technical assumption \eqref{eq-l7} that includes the constants
$\theta_i^{(2)}>1$, $i=2,3,\ldots, I$ and $\theta_1^{(2)}=1$.

Unfortunately, the methods of asymptotic analysis of the present
paper enables us to merely obtain the asymptotes for
$\frac{P_{\lfloor N{\bf\Gamma}\rfloor+\mathbf{1}_i}}{P_{\lfloor
N{\bf\Gamma}\rfloor}}$, $i=1,2,\ldots,I$, for large $N$, but do not
permit to obtain an asymptotic expansion for $P_{\lfloor
N{\bf\Gamma}\rfloor}$ itself. This type of asymptotic expansion
requires more delicate methods of asymptotic analysis. Our
conjecture is that under the assumptions made in Theorem
\ref{thm-4}, $$P_{\lfloor
N{\bf\Gamma}\rfloor}=O\left(N^{-\frac{1}{2}(I-1)}\prod_{i=1}^I
\Big[\frac{\Delta_i^{(1)}-c\Delta_i^{(2)}} {1-c}\Big]^{\lfloor
N\gamma_i\rfloor}\right).$$ The probabilities $P_{\lfloor
N{\bf\Gamma}\rfloor}^{(1)}$ and $P_{\lfloor
N{\bf\Gamma}\rfloor}^{(2)}$ have the similar type of asymptotes
(Lemma \ref{lem-4}), and this is the reason for this conjecture.

\section{Acknowledgements} The author thanks Lachlan Andrew and Yoni Nazarathy for
useful discussions of the problem and help in preparation of the
paper. The author thanks Matthieu Jonckheere for bringing to the
attention of the author the paper by Bonald and Prouti\`{e}re
\cite{BonaldProutiere3} and Nadezhda Suhorukova for providing the
author with materials related to the interior point method. The work
was supported by the Australian Research Council (ARC), grant
DP0985322.


%
%
%
%


\begin{thebibliography}{99}
\footnotesize

\bibitem{Aalto et al}\textsc{Aalto, S., Ayesta, U., Borst, S.C., Misra, V. and N\'{u}\~{n}ez-Queija, R.} (2007). Beyond processor sharing. \emph{Newsletter
ACM SIGMETRICS Performance Evaluation Review}, \textbf{34} (4),
36--43.

\bibitem{Abramov}\textsc{Abramov, V.M.} (2013).
Characterization theorem on losses in $GI^X/GI^Y/1/n$ queues.
\emph{Operations Research Letters}, \textbf{41}, 150-152.


\bibitem{Altman et al}\textsc{Altman, E., Avrachenkov, K.E. and Ayesta, U.} (2006).
A survey on discriminatory processor sharing. \emph{Queueing
Systems}, \textbf{53}, 53-63.
\bibitem{Avrachenkov et al}\textsc{Avrachenkov, K.E., Ayesta, U., Brown, P. and N\'{u}\~{n}ez-Queija, R.} (2005). Discriminatory
processor sharing revisited. In: \emph{Proceedings of IEEE INFOCOM}.
\bibitem{BonaldProutiere1}\textsc{Bonald, T. and Prouti\`{e}re, A.}
(2002). Insensitivity in processor sharing networks.
\emph{Performance Evaluation}, \textbf{49}, 193--209.
\bibitem{BonaldProutiere2}\textsc{Bonald, T. and Prouti\`{e}re, A.}
(2003). Insensitive bandwidth sharing in data networks.
\emph{Queueing Systems}, \textbf{44}, 69--100.
\bibitem{BonaldProutiere3}\textsc{Bonald, T. and Prouti\`{e}re, A.}
(2004). On stochastic bounds for monotonic processor sharing
networks. \emph{Queueing Systems}, \textbf{47}, 81--106.
\bibitem{Boyd}\textsc{Boyd, S. and Vandenberghe, L.} (2004). \emph{Convex
Optimization}. Cambridge University Press, Cambridge.


\bibitem{FMIa}\textsc{Fayolle, G., Mitrani, I. and Iasnogorodski, R.} (1980).
Sharing a processor among many job classes. \emph{Journal of the
ACM}, \textbf{27} (3), 519--532.
\bibitem{Robert}\textsc{Guillemin, F., Robert, Ph. and Zwart, B.}
(2004). Tail asymptotics for processor sharing queues.
\emph{Advances in Applied Probability}, \textbf{36}, 525-543.


\bibitem{JonckheereLopez} \textsc{Jonckheere, M. and L\'{o}pez, S.}
(2012). Large deviation of the stationary measure of networks under
proportional fair allocation. \emph{Mathematics of Operations
Research}, to appear, arXiv:1207.5908v1.

\bibitem{Kalashnikov}\textsc{Kalashnikov, V.V. and Rachev, S.}
(1990). \emph{Mathematical Methods for Construction of Queueing
Models}. Wadsworth and Brooks, Cole.

\bibitem{Kleinrock1}\textsc{Kleinrock, L.} (1967). Time-shared systems:
A theoretical treatment. \emph{Journal of the ACM}, \textbf{14}(2)
242-–261.
\bibitem{Kleinrock2}\textsc{Kleinrock, L.} (1976). \emph{Queueing Systems}, vol. 2.
John Wiley and Sons, New York.

\bibitem{Li}\textsc{Li, H. and Zhao, Y.Q.} (2009). Exact tail
asymptotics in a priority queue - characterization of a preemptive
model. \emph{Queueing Systems}, \textbf{63}, 355-381.

\bibitem{Miyazawa}\textsc{Miyazawa, M.} (2011). Light tail
asymptotics in multidimensional reflecting processes for queueing
networks. (Paper with discussions and the rejoinder.) \emph{Top},
\textbf{19}, 233-316.
\bibitem{Rege and Sengupta}\textsc{Rege, K.M. and Sengupta, B.} (1996).
Queue length distribution for the discriminatory processor sharing
queue. \emph{Operations Research}, \textbf{44}(4), 653-–657.
\bibitem{Verloop et al}\textsc{Verloop, I.M., Ayesta, U. and N\'{u}\~{n}ez-Queija, R.}
(2011). Heavy-traffic analysis of a multiple-phase network with
discriminatory processor sharing. \emph{Operations Research},
\textbf{59} (3), 648--660.


%

\end{thebibliography}
\end{document}